\theoremstyle{plain}
\newtheorem{theorem}{Theorem} [section]
\newtheorem{lemma}[theorem]{Lemma}
\newtheorem{proposition}[theorem]{Proposition}
\theoremstyle{definition}
\newtheorem{definition}[theorem]{Definition}
\theoremstyle{remark}
\newtheorem{remark}[theorem]{Remark}
\numberwithin{theorem}{section}
\numberwithin{equation}{section}
\numberwithin{figure}{section}
\def\N{\mathbb N}
\def\Z{\mathbb Z}
\def\R{\mathbb R}
\def\a{\alpha}
\def\b{\beta}
\def\d{\delta}
\def\e{\varepsilon}
\def\var{\varphi}
\def\Om{\Omega}
\renewcommand{\H}{\mathcal H}
\newcommand{\K}{\mathcal K}
\DeclareMathOperator{\dist}{dist}
\DeclareMathOperator{\co}{co}
\title{Quantitative stability for sumsets in $\R^n$}
\author{Alessio Figalli\thanks{The University of Texas at Austin,
Mathematics Dept. RLM 8.100,
2515 Speedway Stop C1200,
Austin, TX 78712-1202 USA.\, \textit{E-mail address:} \texttt{figalli@math.utexas.edu}} \,\,and David Jerison\thanks{Massachusetts Institute of Technology,
77 Massachusetts Ave, Room E17-308,
Cambridge, MA 02139-4307
USA. \, \textit{E-mail address:} \texttt{jerison@math.mit.edu}}}
\date{}
\begin{document}
\maketitle

\begin{abstract}
Given a measurable set  $A\subset \R^n$ of positive measure, it is not difficult to show that
$|A+A|=|2A|$ if and only if $A$ is equal to its convex hull minus a set of measure zero.
We investigate the stability of this statement: If $(|A+A|-|2A|)/|A|$ is small,
is $A$ close to its convex hull?  Our main result is an explicit control, 
in arbitrary dimension, on the measure of the difference between 
$A$ and its convex hull in terms of $(|A+A|-|2A|)/|A|$.
\end{abstract}

\section{Introduction}
Let $A$ and $B$ be measurable subsets of $\R^n$, and let $c>0$.
Define the set sum and scalar multiple by 
\begin{equation}
\label{eq:def sum}
A + B := \{a + b: a\in A, \ b\in B\}, \quad cA := \{ ca: a\in A\} 
\end{equation}
Let $|A|$ denote the Lebesgue measure of $A$, and assume that $|A|>0$.  It is clear that 
$\frac12(A+A) \supset A$, so, in particular, 
$ \left|\frac12(A+A)\right|\geq |A|$, and 
it is not difficult to show that $\left|\frac12(A+A)\right|= |A|$
implies that $A$ is equal to its convex hull minus a set of measure zero.
(Notice that, even if $A$ is measurable, $A+A$ may not be.  In this case,
$|A+A|$ denotes the outer Lebesgue measure of $A+A$.)

The goal of this paper is to investigate whether this statement is stable.
Let us define the \textit{deficit} of $A$ as
$$
\delta(A):=\frac{\left|\frac12(A+A)\right|}{|A|}-1=\frac{\left|A+A\right|}{|2A|}-1,
$$
The question we address is whether small deficit implies that $A$
is close to its convex hull.

This question has already been extensively investigated in the
one dimensional case.   If one approximates sets in $\R$ 
with finite unions of intervals, then one can
translate the problem to $\Z$, and in the discrete setting the
question becomes a well studied problem in additive combinatorics.
There are many results on this topic, usually called Freiman-type
theorems; we refer to the book \cite{bookTao} for a comprehensive list
of references. Our problem can be seen as a very particular case.

The precise statement in one dimension is the following.
\begin{theorem}
\label{thm:Freiman}
Let $A\subset \R$ be a measurable set, and denote by $\co(A)$ its convex hull.
Then
$$
|A+A|-2|A| \geq \min\{|\co(A)\setminus A|,|A|\},
$$
or, equivalently, if $|A|>0$ then
$$
\delta(A) \geq \frac{1}{2}\min\left\{\frac{|\co(A)\setminus A|}{|A|},1\right\}.
$$
\end{theorem}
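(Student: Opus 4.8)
The plan is to discretize $A$, reduce to a finite subset of $\Z$, and invoke the classical Freiman $3k-4$ theorem (see \cite{bookTao}). First I would dispose of trivial cases and normalize. We may assume $0<|A|<\infty$; if $\co(A)$ is unbounded then $A$ is unbounded, and taking a bounded subset $A_0\subset A$ of positive measure together with points $a_j\in A$, $a_j\to\infty$, spaced far apart, the translates $A_0+a_j\subset A+A$ are pairwise disjoint, so $|A+A|=\infty$ and there is nothing to prove. Hence $\co(A)$ is a bounded interval; translating and rescaling, we may take $\co(A)=[0,1]$, so $\operatorname{ess\,inf}A=0$ and $\operatorname{ess\,sup}A=1$. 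Writing $\mu:=|A|$, and using that $2\mu+\min\{|\co(A)\setminus A|,|A|\}=\min\{\mu+1,3\mu\}$ when $|\co(A)\setminus A|=1-\mu$, the claim becomes
$$
|A+A|\ \ge\ \min\{\mu+1,\ 3\mu\}.
$$

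The discretization is the crux. The crude choice $A_N:=\{1\le j\le N:A\cap[(j-1)/N,j/N)\neq\emptyset\}$ is too lossy — for a totally disconnected set such as a fat Cantor set, $|A_N|/N$ converges to the box content, which is $1$, rather than to $\mu$ — so instead I would discretize a large subset of $A$ on which Lebesgue density is uniform. Given $\eta>0$, Egorov's theorem applied to $x\mapsto\inf_{0<r<1/n}|A\cap(x-r,x+r)|/(2r)$ (which increases to $1$ a.e.\ on $A$) yields a measurable $\tilde A\subset A$ and $r_0>0$ with $|\tilde A|>\mu-\eta$ and $|A\cap(x-r,x+r)|\ge(1-\eta)2r$ for all $x\in\tilde A$ and $0<r<r_0$. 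Fixing $N>1/r_0$, putting $I_j:=[(j-1)/N,j/N)$ and $A_N:=\{j:\tilde A\cap I_j\neq\emptyset\}\subset\{1,\dots,N\}$, I would then record: (i) $\tilde A\subset\bigcup_{j\in A_N}I_j$ gives $|A_N|/N\ge|\tilde A|>\mu-\eta$, while $|A\cap I_j|\ge(1-2\eta)/N$ for $j\in A_N$ (the ball of radius $1/N$ about a point of $\tilde A\cap I_j$ contains $I_j$) forces $|A_N|/N\le\mu/(1-2\eta)$; (ii) since $|A\cap[0,t)|>0$ for every $t>0$, both $\inf\tilde A$ and $1-\sup\tilde A$ are at most $\tau(\eta):=\sup\{t:|A\cap[0,t)|\le\eta\}$, and $\tau(\eta)\to0$ as $\eta\to0$, so the span $D_N:=\max A_N-\min A_N$ satisfies $D_N\ge N(1-2\tau(\eta))-1$; (iii) at a Lebesgue density point of $\tilde A$, for $N$ large $\tilde A$ meets more than half of the intervals $I_j$ in a small window about it, hence two consecutive ones, so $\gcd(A_N-A_N)=1$.

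Finally, apply Freiman and transfer back. By (iii), $A_N-\min A_N\subset\{0,\dots,D_N\}$ contains $0$ and $D_N$ and lies in no proper arithmetic progression, so Freiman's $3k-4$ theorem gives $|A_N+A_N|\ge\min\{|A_N|+D_N,\,3|A_N|-3\}$. To descend to $A+A$: for each $v\in A_N+A_N$ pick $j_1,j_2\in A_N$ with $j_1+j_2=v$; then $(A\cap I_{j_1})+(A\cap I_{j_2})\subset(A+A)\cap[(v-2)/N,v/N)$ has measure $\ge|A\cap I_{j_1}|+|A\cap I_{j_2}|\ge2(1-2\eta)/N$ by one-dimensional Brunn--Minkowski. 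The intervals $[(v-2)/N,v/N)$ are pairwise disjoint over $v$ of a fixed parity, so splitting $A_N+A_N$ according to parity and keeping the larger half gives
$$
|A+A|\ \ge\ \frac{|A_N+A_N|}{2}\cdot\frac{2(1-2\eta)}{N}\ \ge\ (1-2\eta)\min\Bigl\{\tfrac{|A_N|}{N}+\tfrac{D_N}{N},\ \tfrac{3|A_N|}{N}-\tfrac{3}{N}\Bigr\}.
$$
Letting $N\to\infty$ along a subsequence on which $|A_N|/N$ and $D_N/N$ converge, and then $\eta\to0$, facts (i)--(ii) yield $|A+A|\ge\min\{\mu+1,3\mu\}$, which is the assertion.

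The hard part will be everything surrounding the discretization: one is forced to work on the uniform-density subset $\tilde A$ (a crude reduction to $\Z$ destroys the sharp constant for irregular sets), one must rule out that $A_N$ collapses into a sublattice, and one must absorb the parity loss in the transfer step — whereas the additive-combinatorial core is supplied ready-made by Freiman's theorem.
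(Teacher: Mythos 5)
Your argument is correct in substance but follows a genuinely different route from the paper's. The paper deliberately avoids citing Freiman's theorem: it proves from scratch a special case of the ``$3k-3$'' statement for sets whose endpoints are $0$ and a prime $p$ (so that the projection to $\Z_p$ kills the sublattice obstruction via Cauchy--Davenport), discretizes a \emph{compact} $A$ by outer approximation with grids of prime size, shows $|A_k|\to|A|$ and $|A_k+A_k|\to|A+A|$ by compactness, and only then passes to general measurable sets by inner approximation. You instead take the $3k-4$ theorem as a black box, handle the sublattice issue by a Lebesgue-density pigeonhole, and work with a general measurable set directly by first uniformizing the density via Egorov; crucially, your transfer back to the continuum goes in the opposite direction (a cell-by-cell \emph{lower} bound on $|A+A|$, with the parity loss absorbed by the per-cell mass $\ge 2(1-2\eta)/N$) rather than the paper's upper bound $|A_k+A_k|\to|A+A|$. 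What your approach buys is a one-pass treatment of measurable sets and a shorter write-up modulo the cited combinatorics; what the paper's buys is self-containedness. Three small points to fix: (1) your motivating example is off --- for a \emph{compact} set such as a fat Cantor set the box content equals the Lebesgue measure (the $\epsilon$-neighborhoods decrease to the set), so the crude discretization does converge there; the real reason you need the Egorov step is the per-cell lower bound $|A\cap I_j|\ge(1-2\eta)/N$ used in the transfer, which fails for the crude choice (e.g.\ for a dense set of measure $1/2$). (2) Since $A+A$ and the cell sums $(A\cap I_{j_1})+(A\cap I_{j_2})$ need not be measurable and outer measure is not additive on disjoint nonmeasurable pieces, you should replace $A\cap I_j$ by compact subsets of nearly full measure before summing over the disjoint windows. (3) ``More than half of the intervals meet $\tilde A$'' does not quite force two consecutive ones when the number of intervals is odd; take the density threshold a bit above $1/2$ (which a density point gives for free).
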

This theorem can be obtained as a corollary of a result
of G. Freiman \cite{freiman} about the structure of additive subsets of $\Z$.
(See \cite{freiman2} or \cite[Theorem 5.11]{bookTao} for a statement and a proof.)
However, it turns out that to prove of Theorem \ref{thm:Freiman} 
one only needs weaker results.  For convenience of the reader,
instead of relying on deep and intricate combinatorial results, 
we will give an elementary, completely self-contained proof
of Theorem \ref{thm:Freiman}.  Our proof is based on the simple
observation that a subset of $\R$ can be discretized to a subset of
$\Z$ starting at $0$ and ending at a prime number $p$.  
This may look strange from an analytic point of view, but it
considerably simplifies the combinatorial aspects.

The main result of this paper is a quantitative stability result in arbitrary dimension,
showing that a power of $\delta(A)$ dominates the measure of the difference between
$A$ and its convex hull $\co(A)$.
\begin{theorem}
\label{thm:main}
Let $n \geq 2$.
There exist computable dimensional constants $\delta_n,c_n>0$
such that if
$A\subset \R^n$ is a measurable set of positive measure 
with $\delta(A)\leq \delta_n$, then
$$
\delta(A)^{\a_n} \geq c_n\frac{|\co(A)\setminus A|}{|A|},\qquad \a_n:=\frac{1}{8\cdot 16^{n-2}n!(n-1)!}.
$$
\end{theorem}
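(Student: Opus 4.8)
I would argue by induction on the dimension $n$, using Theorem \ref{thm:Freiman} as the base case. After a preliminary reduction — truncating $A$ so that it is bounded, and observing that there is nothing to prove if $|\co(A)\setminus A|>|A|$ (smallness of $\delta(A)$ will be shown to force $|\co(A)\setminus A|\ll|A|$) — I would normalize. Both $\delta(A)$ and $|\co(A)\setminus A|/|A|$ are invariant under invertible affine maps $x\mapsto Lx+b$, so one may apply the linear map sending John's ellipsoid of $\co(A)$ to a ball and thereby assume that $\co(A)$ is comparable to a fixed ball, with all widths of the same order.

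Next, fix coordinates $x=(x',x_n)\in\R^{n-1}\times\R$, and for $s\in\R$ set $A_s:=\{x'\in\R^{n-1}:(x',s)\in A\}$ and $v(s):=|A_s|^{1/(n-1)}$ (the ``profile function''). Since $(x',s),(y',t)\in A$ force $(x'+y',s+t)\in A+A$, the slice $(A+A)_r$ contains $A_s+A_t$ whenever $s+t=r$, and Brunn--Minkowski gives $|A_s+A_t|^{1/(n-1)}\ge v(s)+v(t)$. Integrating over $r$ and comparing with $|2A|=2^{n-1}\int|A_s|\,ds$, I expect $\delta(A)$ to dominate, up to a dimensional constant, the sum of two nonnegative terms: (i) a one-dimensional \emph{concavity deficit} of $v$, namely the gap between $\int\big(\sup_{s+t=r}(v(s)+v(t))\big)^{n-1}\,dr$ and $\int(2v(r))^{n-1}\,dr$; and (ii) a weighted average over $s$ of the $(n-1)$-dimensional quantities $|\co(A_s)\setminus A_s|/|A_s|$ together with the Brunn--Minkowski deficits of the pairs $(A_s,A_t)$ that detect failure of the slices to be concentric homothets of one fixed convex body.

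I would then apply the one-dimensional Theorem \ref{thm:Freiman} to the super-level sets $\{s:v(s)>\lambda\}$ of $v$: smallness of term (i), with the normalization in force, should force $v$ to be $L^1$-close to the profile $s\mapsto|\co(A)_s|^{1/(n-1)}$ of the convex hull, so that all but a $\delta(A)^{\alpha_n}$-fraction of $|A|$ lies at ``good'' heights $s$ where $|A_s|$ is comparable to $|\co(A)_s|$. At each good height the inductive hypothesis in dimension $n-1$ shows that $A_s$ fills $\co(A_s)$ up to measure $\lesssim\delta(A)^{\alpha_{n-1}}|A_s|$. The hard part — the heart of the proof — is \emph{alignment}: one must show that $\co(A_s)$ and $\co(A_t)$ are close (as subsets of $\R^{n-1}$) for nearby good heights $s,t$, since otherwise a slowly twisting stack of individually almost-convex slices could assemble into a badly nonconvex set with tiny deficit. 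Here I would invoke the quantitative stability of the Brunn--Minkowski inequality: approximate equality in $|A_s+A_t|^{1/(n-1)}=v(s)+v(t)$ forces $A_s$ and $A_t$ to be, up to a controlled error, homothetic and concentrically placed, and feeding the nearly-extremal pairs furnished by term (ii) into this should pin down a single convex body $K$ and an affine motion law $s\mapsto c(s)+r(s)K$ valid for a full-measure family of good heights. Reassembling the good slices then exhibits a convex set lying within measure $\lesssim\delta(A)^{\alpha_n}|A|$ of $A$ (hence within that distance of $\co(A)$), which is the claimed estimate.

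\noindent\textbf{Where the difficulty lies.} The main obstacle is the alignment step and the bookkeeping of exponents it entails. Quantitative Brunn--Minkowski carries a polynomial loss; the inductive hypothesis contributes the factor $\alpha_{n-1}$; converting $L^1$-closeness of the profile and of the slices into genuine measure-closeness of $A$ costs further fixed powers; and each ``good height versus bad height'' threshold has to be tuned to balance the competing errors. Carrying these losses through the recursion — each dimension-reduction step roughly dividing the usable exponent by a factor of order $16\,n(n-1)$, plus an absolute loss at the base — is exactly what produces $\alpha_n=1/\big(8\cdot 16^{\,n-2}\,n!\,(n-1)!\big)$ and what forces $\delta_n$ and $c_n$ to be taken small (though still explicit).
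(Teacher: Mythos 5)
Your slicing is transposed relative to the paper's: you cut $A$ into $(n-1)$-dimensional horizontal slices $A_s$ and apply the inductive hypothesis to each slice, whereas the paper projects onto $\R^{n-1}$ and works with \emph{one-dimensional} fibers $A_y$, applying Theorem \ref{thm:Freiman} fiberwise and reserving the inductive hypothesis for the level sets of $y\mapsto \H^1(A_y)$ (and, later, of the endpoint functions of the fibers). This difference is not cosmetic, because the step you correctly identify as the heart of the matter --- alignment of the slices --- is exactly where your route breaks down.

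The gap is this: to align $\co(A_s)$ and $\co(A_t)$ you invoke quantitative stability of the Brunn--Minkowski inequality for the pair of \emph{measurable} sets $(A_s,A_t)$, i.e.\ that near-equality in $|A_s+A_t|^{1/(n-1)}\ge v(s)+v(t)$ forces $A_s$ and $A_t$ to be quantitatively close to concentric homothets of a single convex body. No such result is available at this point. The quantitative stability results \cite{fmpK,fmpBM} hold only for \emph{convex} sets; Christ's results \cite{christ1,christ2} are purely qualitative; and the quantitative two-set statement you need is precisely the main theorem of the sequel \cite{fjBM}, which is proved \emph{using} the present theorem, so invoking it would be circular. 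Nor does the inductive hypothesis substitute for it: it is a one-set statement and says nothing about how $A_s$ sits relative to $A_t$ for $s\neq t$; a slowly twisting stack of individually convex slices is exactly the configuration it cannot rule out. The paper circumvents this by making the fibers one-dimensional, so that ``alignment'' reduces to controlling the two scalar endpoint functions $a(y),b(y)$: the inclusion $A_{y'}+A_{y''}\subset (A+A)_{y'+y''}$ combined with Theorem \ref{thm:Freiman} yields the approximate midpoint inequality \eqref{eq:ab}, from which boundedness (Step 3, via Lemma \ref{lemma:1d}) and $L^1$-closeness to concave/convex envelopes (Step 4) are extracted by hand, with a further application of the inductive hypothesis to the level sets of a perturbation of $b$. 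To salvage the horizontal-slicing approach you would have to supply an independent quantitative two-set alignment lemma in $\R^{n-1}$, a problem of at least the same difficulty as the theorem you are trying to prove.
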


Concerning this higher dimensional case, M. Christ 
\cite{christ1,christ2} proved that if
$|A+B|^{1/n}-|A|^{1/n}-|B|^{1/n} \to 0$, then $A$ and $B$ are both
close to some dilation of the same convex set. In particular, as a corollary 
one obtains that if
 $\delta(A) \to 0$ then
$|\co(A)\setminus A|\to 0$.
Although Christ's result does not imply any quantitative estimate for our problem, in another
direction it is more general, since it represents a
qualitative stability for the Brunn-Minkowski inequality.  Furthermore,
if we restrict $A$ and $B$ to the class of convex sets, a quantitative stability 
estimate for the Brunn-Minkowski inequality,
with the sharp power law  dependence on the deficit, was proved 
in \cite{fmpK,fmpBM}.   

Much of the difficulty in Christ's work arises
from the fact that he is dealing with different sets $A$ and $B$.  In
Appendix B, we show how his methods yield a relatively quick proof of
qualitative stability when $A=B$ is bounded.
Our purpose here is to provide a \emph{quantitative} stability estimate,
and since the argument is very involved, we decided to focus on the case
$A=B$.

Although in a few places our arguments may resemble
those of Christ, our strategy and most elements of our proof 
are very different, and his techniques and ours can be seen as complementary.
Indeed, as shown in a sequel to this paper \cite{fjBM}, 
a combination of them with the results from \cite{fmpK,fmpBM}
(and several new ideas)
makes it possible to prove that if $|A+B|^{1/n}-|A|^{1/n}-|B|^{1/n}$
is small relatively to the measure of $A$ and $B$, then both $A$
and $B$ are close, in a quantitative way, to dilations of the same convex set, yielding 
a
proof of quantitative stability of the Brunn-Minkowski inequality 
for measurable sets.\\

The paper is structured as follows.  In the next section we prove
Theorem \ref{thm:Freiman}.  Then in Section \ref{sect:main} we prove
Theorem \ref{thm:main} by induction on the dimension.  The strategy
is outlined at the beginning of Section \ref{sect:main}.  Some of the
technical results used in the proof of Theorem \ref{thm:main} are
collected in Appendix A.  \\

\textit{Acknowledgements:}
AF was partially supported by NSF Grant DMS-0969962 and DMS-1262411.
DJ was partially supported by NSF Grant DMS-1069225 and the Stefan Bergman Trust.
AF thanks Terence Tao for pointing out to him the reference to Freiman's Theorem.
This work was begun during AF's visit at MIT during the Fall 2012.
AF wishes to thank the Mathematics Department at MIT
for its warm hospitality.

\section{The 1d case: Proof of Theorem \ref{thm:Freiman}}

To prove Theorem \ref{thm:Freiman}, we first collect some preliminary results.

\begin{definition}
Let $A,B$ be nonempty finite subset of an additive group $Z$, and let $e \in A-B$. Then we define the $e$-\emph{transform} of $A$ and $B$ as
$$
A_{(e)}:=A\cup(B+e),\qquad B_{(e)}:=B\cap (A-e).
$$
\end{definition}

The following are three simple properties of the $e$-transform (in this discrete context, $|A|$ denotes the cardinality of $A$):
\begin{equation}
\label{eq:etransf_1}
A_{(e)}+B_{(e)} \subset A+B,
\end{equation}
\begin{equation}
\label{eq:etransf_2}
|A_{(e)}|+|B_{(e)}| = |A|+|B|,
\end{equation}
\begin{equation}
\label{eq:etransf_3}
|A_{(e)}| \geq |A|,\qquad |B_{(e)}| \leq |B|.
\end{equation}

The following is a classical result about sum of sets in $\Z_p$ \cite{cauchy,davenport},
but for completeness we include its simple proof.
A key fact used in the proof is that $\Z_p$ has no nontrivial subgroups.

\begin{lemma}[Cauchy-Davenport inequality]
\label{eq:lemmaCD}
If $Z=\Z_p$ with $p$ prime, then
$$
|A+B|\geq \min\{|A|+|B|-1,p\}.
$$
\end{lemma}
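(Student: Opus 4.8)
The plan is to follow the classical argument via the $e$-transform (Dyson's transformation): I would argue by induction on $|B|$, invoking the three elementary properties \eqref{eq:etransf_1}--\eqref{eq:etransf_3}. The base case is $|B|=1$, where $A+B$ is simply a translate of $A$, so that $|A+B|=|A|=|A|+|B|-1$ and the inequality holds.

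For the inductive step I would fix $A,B$ with $|B|\ge 2$ and assume the statement for all pairs whose second member has fewer than $|B|$ elements. If $A+B=\Z_p$ there is nothing to prove, so I may assume $A+B\neq\Z_p$, which in particular forces $A\neq\Z_p$ and hence $|A|\le p-1$. The crux is to produce an element $e\in A-B$ with $|B_{(e)}|<|B|$. Granting this, the argument finishes quickly: for any $e\in A-B$ the set $B_{(e)}=B\cap(A-e)$ is nonempty (directly from the definition of $A-B$) and $A_{(e)}\supset A$ is nonempty, so the inductive hypothesis applies to $(A_{(e)},B_{(e)})$; combining it with \eqref{eq:etransf_1} and then \eqref{eq:etransf_2},
$$
|A+B|\ \ge\ |A_{(e)}+B_{(e)}|\ \ge\ \min\{|A_{(e)}|+|B_{(e)}|-1,\,p\}\ =\ \min\{|A|+|B|-1,\,p\},
$$
which closes the induction.

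The step I expect to be the main obstacle — and the only place where the primality of $p$ (equivalently, the absence of nontrivial proper subgroups of $\Z_p$) is used — is the existence of such an $e$. I would argue by contradiction: if $|B_{(e)}|=|B|$ for every $e\in A-B$, then $B+e\subset A$ for every $e\in A-B$. Fixing $b_0\in B$ and putting $H:=B-b_0\ni 0$, for each $a\in A$ the element $a-b_0$ lies in $A-B$, so $a+H=B+(a-b_0)\subset A$; hence $A+H\subset A$, and iterating, $A+\langle H\rangle\subset A$, where $\langle H\rangle$ is the subgroup generated by $H$. Choosing $a_0\in A$ then gives $a_0+\langle H\rangle\subset A$, whence $|\langle H\rangle|\le|A|\le p-1<p$. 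Since $\Z_p$ has no subgroup other than $\{0\}$ and $\Z_p$ itself, it follows that $\langle H\rangle=\{0\}$, so $B=\{b_0\}$, contradicting $|B|\ge 2$. This produces the required $e$ and completes the proof; everything outside this claim is routine bookkeeping with the three properties of the $e$-transform.
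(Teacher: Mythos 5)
Your proof is correct and follows essentially the same route as the paper: induction on $|B|$ via the $e$-transform, with the primality of $p$ entering only through the absence of nontrivial proper subgroups of $\Z_p$ in the case where no $e$-transform shrinks $B$. The paper phrases that case via the stabilizer $Sym_1(A)=\{h:A+h=A\}\supset B-B$ and concludes $A=\Z_p$ directly, whereas you assume $A+B\neq\Z_p$ and derive a contradiction from the subgroup generated by $B-b_0$; the two are interchangeable.
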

\begin{proof}
 The proof is by induction on the size of $|B|$, the case $|B|=1$ being trivial.
 
 For the induction step, we consider two cases:\\
 \textit{Case 1: there exists $e \in A-B$ such that $|B_{e}|<|B|$.}
 Then by the inductive step
 $$
 |A_{(e)}+B_{(e)}|\geq \min\{|A_{(e)}|+|B_{(e)}|-1,p\},
 $$
 and we conclude by \eqref{eq:etransf_1} and \eqref{eq:etransf_2}.\\
 \textit{Case 2: $|B_{e}|=|B|$ for any $e \in A-B$.}
 This means that $B_{(e)}=B$ for any $e \in A-B$, which implies that $B+e\subset A$ for any $e \in A-B$, that is
 $$
 A+B-B\subset A.
 $$
Thus $B-B$ is contained inside $Sym_1(A):=\{h \in \Z_p:A+h=A\}$.
(Notice that, since $|A+h|=|A|$, the inclusion $A+h\subset A$ is equivalent to the equality
$A+h=A$.)
 
It is a general fact (easy to check) that $Sym_1(A)$ is a subgroup.
Since in our case the only subgroups are $\{0\}$ and $\Z_p$, and we are assuming that $|B|>1$, this means that $Sym_1(A)=\Z_p$, so $A=\Z_p$ and the result
is true since $|A+B|\geq |A|=p$.
\end{proof}

We can now prove the following important result, which is just
a special case of  Freiman's ``$3k-3$ Theorem'' \cite{freiman,freiman2}.

\begin{proposition}
\label{prop:3k3}
Let $A$ be a finite nonempty subset of $\Z$ with $\min(A)=0$ and $\max(A)=p$, with $p$ prime. Assume that $|A+A|<3|A|-3$.
Then $|\{0,\ldots,p\}\setminus A|\leq |A+A|-2|A|+1$.
\end{proposition}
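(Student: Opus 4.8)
The plan is to prove Proposition~\ref{prop:3k3} by embedding $A$ into $\Z_p$ and exploiting the Cauchy--Davenport inequality together with the structure forced by the hypothesis $|A+A|<3|A|-3$. First I would set $k:=|A|$ and note that since $0,p\in A$, the set $A$ is not contained in any arithmetic progression of common difference $>1$ inside $\{0,\dots,p\}$; also the reduction $\bar A\subset\Z_p$ of $A$ satisfies $|\bar A|=k-1$ (the endpoints $0$ and $p$ collapse). The sumset $A+A\subset\{0,1,\dots,2p\}$, and its reduction modulo $p$ is contained in $\overline{A+A}=\bar A+\bar A$, with $|A+A|\ge |\bar A+\bar A|$ possibly after accounting for the small amount of ``wrap-around'' overlap. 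Applying Lemma~\ref{eq:lemmaCD} to $\bar A$ gives $|\bar A+\bar A|\ge\min\{2|\bar A|-1,p\}=\min\{2k-3,p\}$; but our hypothesis says $|A+A|\le 3k-4$, and this must be reconciled with the lower bound, which is where the structural information comes out.

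The key step is to run an $e$-transform (compression) argument directly on the pair $(A,A)$, or rather to use the standard Freiman-type induction: I would show that either $A$ is already ``dense'' in $\{0,\dots,p\}$ in the sense that $|\{0,\dots,p\}\setminus A|\le |A+A|-2|A|+1$ — which is the desired conclusion — or one can strictly compress $A$ toward an initial segment of $\Z_p$ while not increasing $|A+A|$ and not decreasing $|A|$, reducing to a smaller case. More concretely, order the elements of $A$ as $0=a_0<a_1<\dots<a_{k-1}=p$. Consider the $k-1$ ``gaps'' $a_i-a_{i-1}$; the total gap is $p$. The sums $a_0+a_i$ and $a_{k-1}+a_i=p+a_i$ for $i=0,\dots,k-1$ give $2k-1$ distinct elements of $A+A$ lying in $[0,p]\cup[p,2p]$, sharing only the value $p$ if $p\in 2A$. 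Then I would insert, for each gap, an extra element of $A+A$ strictly between consecutive sums: this is the heart of the $3k-3$ theorem and it produces roughly one new sum per ``hole'' in $A$, so that $|A+A|\ge 2k-1+(\text{number of holes})$ unless the holes can be ``filled'' — precisely $|A+A|-2|A|+1\ge |\{0,\dots,p\}\setminus A|$. I expect the bookkeeping of these intermediate sums, and showing they are genuinely new and not double-counted against wrap-around in $\Z_p$, to be the main obstacle; the primality of $p$ is what rules out the degenerate arithmetic-progression case via Cauchy--Davenport, since $\Z_p$ has no proper nontrivial subgroups and hence $A$ cannot be trapped in a coset of one.

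Alternatively — and this is the cleaner route I would actually write up — I would argue by contradiction: suppose $|\{0,\dots,p\}\setminus A| \ge |A+A|-2|A|+2$. Pass to $\Z_p$, apply an $e$-transform repeatedly to $(\bar A,\bar A)$ to push $\bar A$ toward an interval $\{0,1,\dots,m-1\}\subset\Z_p$; by \eqref{eq:etransf_1} and \eqref{eq:etransf_2} this does not increase $|\bar A+\bar A|$ and preserves cardinality, and when $\bar A$ is an interval the sumset is exactly $\{0,\dots,2m-2\}$ with $|\bar A+\bar A|=2m-1$ (assuming $2m-1\le p$, guaranteed by $|A+A|<3|A|-3$ after translating back). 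Tracking how the complement size and the sumset size change under each transform, one sees that the inequality $|A+A|\ge 2|A|-1+|\{0,\dots,p\}\setminus A|$ is preserved or improved at every step and holds with equality in the interval case, giving the contradiction. The delicate point is that the $e$-transform is a statement in $\Z_p$ while the conclusion is a statement about the integer interval $\{0,\dots,p\}$, so I would need a careful lemma translating between the ``number of missing residues in $\Z_p$'' and $|\{0,\dots,p\}\setminus A|$, using that exactly two integers ($0$ and $p$) map to the residue $0$; keeping this correspondence consistent under compression is where most of the work lies.
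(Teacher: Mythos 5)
Your proposal assembles the right tools --- reduction $\phi_p:\Z\to\Z_p$, the identity $|\phi_p(A)|=|A|-1$, and Cauchy--Davenport with primality ruling out proper subgroups --- but it is missing the single decisive inequality that makes the paper's proof work, and the two routes you offer in its place are precisely the ``intricate combinatorial'' content that is not carried out. The conclusion to be proved is equivalent to $p\leq|A+A|-|A|$. The paper's key observation is that
$$
|\phi_p(A+A)|\;\leq\;|A+A|-|A|,
$$
which follows because $A+A$ contains the two disjoint-in-$\Z$ but identically-projected sets $A=A+0$ and $A+p$ (here both $0\in A$ and $p\in A$ are used): decomposing $A+A$ into $(A+A)\cap[0,p-1]$, $A+p$, and the rest of $(A+A)\cap[p,2p-1]$, the middle piece of size $|A|$ contributes nothing new to the image mod $p$. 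This one inequality is used twice: first, combined with $|A+A|<3|A|-3$ it gives $|\phi_p(A+A)|<2|\phi_p(A)|-1$, so Cauchy--Davenport forces $|\phi_p(A+A)|=p$; second, it converts $|\phi_p(A+A)|=p$ back into $p\leq|A+A|-|A|$. Your first paragraph only records $|A+A|\geq|\bar A+\bar A|$ ``possibly after accounting for wrap-around,'' which is too weak in both directions: without the extra $-|A|$ you cannot derive the contradiction with $2k-3$ from the hypothesis $|A+A|\leq 3k-4$, nor can you upgrade $|\bar A+\bar A|=p$ to the desired $p\leq|A+A|-|A|$.

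The two substitute arguments do not close this gap. The Freiman-style gap-counting in your second paragraph is exactly the bookkeeping the paper deliberately avoids, and you yourself flag its core (that the inserted sums are genuinely new and not double-counted) as an unresolved obstacle. The compression argument in your third paragraph rests on the assertion that the inequality $|A+A|\geq 2|A|-1+|\{0,\ldots,p\}\setminus A|$ is preserved under $e$-transforms in $\Z_p$; but after one transform the pair $(\bar A_{(e)},\bar A_{(e)})$ is no longer of the form $(B,B)$, and the quantity $|\{0,\ldots,p\}\setminus A|$ has no meaning for a subset of $\Z_p$ --- the ``careful translating lemma'' you defer to is the whole difficulty, and I do not see how to supply it short of rediscovering the displayed inequality above. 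So the proposal as written has a genuine gap.
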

\begin{proof}
Since the cases $|A|=1,2$ are trivial, we can assume $|A|\geq 3$.
We want to show that $p\leq |A+A|-|A|$. 

Let $\phi_p:\Z\to\Z_p$ denote the canonical quotient map.
We claim that
\begin{equation}
\label{eq:phip AA}
|\phi_p(A+A)| \leq |A+A|-|A|.
\end{equation}
Indeed, $A+A$ can be written as the disjoint union of the three sets
$$
(A+A)\cap [0,p-1],\qquad A+p,\qquad \bigl((A+A)\cap [p,2p-1]\bigr)\setminus (A+p),
$$
hence
\begin{equation}
\label{eq:phip AA1}
|A+A|-|A| \geq \bigl|(A+A)\cap [0,p-1]\bigr|+\bigl|\bigl((A+A)\cap [p,2p-1]\bigr)\setminus (A+p)\bigr|.
\end{equation}
In addition, since $\phi_p(A+p)=\phi_p(A)\subset \phi_p\bigl((A+A)\cap [0,p-1]\bigr)$ (because $0 \in A$), we have
$$
\phi_p(A+A)=\phi_p\bigl((A+A)\cap [0,p-1] \bigr) \cup \phi_p\bigl(\bigl((A+A)\cap [p,2p-1]\bigr)\setminus (A+p)\bigr),
$$
which implies that
\begin{equation}
\label{eq:phip AA2}
|\phi_p(A+A)| \leq \bigl|(A+A)\cap [0,p-1]\bigr|+\bigl|\bigl((A+A)\cap [p,2p-1]\bigr)\setminus (A+p)\bigr|.
\end{equation}
Combining \eqref{eq:phip AA1} and \eqref{eq:phip AA2}, we obtain \eqref{eq:phip AA}.

By \eqref{eq:phip AA} and the hypothesis $|A+A|<3|A|-3$, we get (observe that $|\phi_p(A)|=|A|-1$)
$$
|\phi_p(A+A)|<2|A|-3=2|\phi_p(A)|-1,
$$
so by the Cauchy-Davenport inequality (Lemma \ref{eq:lemmaCD}) we deduce that
$$
|\phi_p(A+A)|=p.
$$
Using \eqref{eq:phip AA} again, this gives
$$
|A+A|-|A| \geq p,
$$
concluding the proof.
\end{proof}

We can now prove Theorem \ref{thm:Freiman}.

\begin{proof}[Proof of Theorem \ref{thm:Freiman}] We may assume that
$|A+A|-2|A|<|A|$, otherwise there is nothing to prove.  After
dilation and translation, we can also assume $(0,1)\subset \co(A) \subset [0,1]$.
We prove the result in two steps.

\textit{Step 1: $A$ is compact.}
Since $A$ is compact, so is $\co(A)$,
and so the inclusion $(0,1)\subset \co(A)\subset [0,1]$ implies that $\co(A)=[0,1]$.

Take $\{p_k\}_{k \in \N}$ a sequence of prime numbers
tending to infinity, and for each $k$ define the family of closed intervals
$$
I_{k,j}:=\left[\frac{j}{p_k+1},\frac{j+1}{p_k+1}\right],\qquad j=0,\ldots,p_k.
$$
Consider now the sets
$$
A_k:=\bigcup_{j\,:\,I_{k,j}\cap A\neq\emptyset} I_{k,j}.
$$
Observe that $A_k\supset A$ by construction. In addition, since $A$ is compact (and so
also $A+A$ is compact),
one can easily check that 
$$
\bigcap_{k=1}^\infty \bigcup_{\ell=k}^\infty A_\ell=A,\qquad \bigcap_{k=1}^\infty
\bigcup_{\ell=k}^\infty (A_\ell+A_\ell)=(A+A),
$$
from which it follows that
$$
\left|\bigcup_{\ell=k}^\infty A_\ell \right| \to |A|,\qquad \left|\bigcup_{\ell=k}^\infty (A_\ell+A_\ell)\right|\to
|A+A|\qquad\text{as $k \to \infty$}.
$$
In particular, since $A_k\supset A$ for any $k$, this implies
\begin{equation}
\label{eq:convergence measures}
|A_k|\to |A|,\qquad |A_k+A_k|\to |A+A|\qquad\text{as $k \to \infty$}.
\end{equation}
Since $|A+A|-2|A|<|A|$ and $1/p_k\to 0$, 
it follows from \eqref{eq:convergence measures} that
\begin{equation}
\label{eq:Ak small}
|A_k+A_k|-2|A_k| < |A_k|-\frac{3}{p_k+1}
\end{equation}
for $k$ sufficiently large. 

Let us consider the sets
$$
B_k:=\{j \in \Z:I_{k,j} \subset A_k\}.
$$
Recalling that $\co(A)=[0,1]$, it is easy to check that $\min(B_k)=0$ and $\max(B_k)=p_k$.
In addition, it follows immediately from \eqref{eq:Ak small} that $|B_k+B_k|<3|B_k|-3$.
Hence we can apply Proposition \ref{prop:3k3} to deduce that
$$
|\{0,\ldots,p_k\}\setminus B_k|\leq |B_k+B_k|-2|B_k|+1,
$$ 
which expressed in terms of $A_k$ becomes
$$
|[0,1]\setminus A_k|\leq |A_k+A_k|-2|A_k|+\frac{1}{p_k+1}.
$$
Letting $k \to \infty$ and using \eqref{eq:convergence measures} proves the result
when $A$ is compact.

\textit{Step 2: $A$ is a measurable set.}
This case will follow easily by inner approximation.
Let $\{A_k\}_{k \in \N}$ be an increasing sequence of compact sets contained in $A$ 
such that $|A_k| \to |A|$.  

Also, let $\{a_k,b_k\}_{k \in \N} \subset A$ be sequences of points
such that $[a_k,b_k] \to \co(A)$ as $k\to \infty$.
Then, up to replacing $A_k$ with $A_k\cup\left(\cup_{1\leq j \leq k}\{a_j,b_j\}\right)$, we can assume that  $\co(A_k)\supset [a_k,b_k]$,
so that in particular
$$
|\co(A)\setminus \co(A_k)| \to 0\qquad \text{as $k\to \infty$}.
$$
In addition, since $A_k\subset A$, $|A_k+A_k|\leq |A+A|$.  Therefore,
$|A+A|-2|A|<|A|$ implies that $|A_k+A_k|-2|A_k|<|A_k|$ for $k$ 
sufficiently large.  So, by Step 1,
$$
|\co(A_k)\setminus A_k| \leq |A_k+A_k|-2|A_k| \leq |A+A|-2|A_k|,
$$
and letting $k \to \infty$ concludes the proof.
\end{proof}

\section{The induction step: Proof of Theorem \ref{thm:main}}
\label{sect:main}

Let $\H^k$ denote the $k$-dimensional Hausdorff measure on $\R^n$.  
Denote by $(y,t) \in \R^{n-1}\times \R$ a point in $\R^n$,
and denote by $\pi:\R^n \to \R^{n-1}$ the canonical projection $\pi(y,t):= y$.
Given $E\subset \R^n$ and $y \in \R^{n-1}$, we use the notation 
\begin{equation}
\label{eq:Ey}
E_y := E\cap \pi^{-1}(y).
\end{equation}
We say that $E$ is {\em $t$-convex}  if $E_y$ is a segment for
every $y \in \pi(E)$.

Throughout the proof, $C$ will denote a generic constant depending only 
on the dimension, which may change from line to line.
The proof of the inductive step is long and involved, so we will
divide it into several steps and sub-steps.  The strategy is outlined here.

Notice that, as in the proof of Theorem \ref{thm:Freiman}, it suffices to 
consider only the case when $A$ is compact, since the general case follows easily by inner approximation.

In Step 1 we replace $A$ by a $t$-convex 
set $A^*$ as follows.  By applying Theorem  \ref{thm:Freiman}
to the sets $\{A_y\}_{y \in \pi(A)}$, we deduce that most of 
these subsets of $\R$ are close to their convex hull.
Also, by Theorem \ref{thm:main} applied with $n-1$, we can find a small 
constant $s_0>0$ such that the set $E_{s_0}:= \{y:\H^{1}(A_y)>s_0\}$ is close to a 
convex set. This allows us to construct a $t$-convex set $A^*$ 
whose sections consist
of ``vertical'' segments $\co(A_y)=\{y\}\times [a(y),b(y)] $ 
at most points $y$ of $E_{s_0}$.  We then show that $A^*$ is
close to $A$ and has several other nice geometric properties.  These
properties lead, in Step 2, to the fact that the midpoints $c(y):=(a(y)+b(y))/2$ of
the sections of  $A^*$ have bounded second differences 
as a function of $y$.

In Step 3, we show that, after an affine transformation of
determinant $1$, $A^*$ can be assumed to be bounded.  Observe that such
transformations preserve the Lebesgue measure $|A|$, the deficit
$\delta(A)$, and, of course, the property of convexity.  To carry out
the third step, the geometric properties of $A^*$ once again play a
crucial role.  Near convexity of $E_{s_0}$ immediately shows that there is
a linear transformation in $y$ that makes $E_{s_0}$ bounded.  Showing that
one can subtract a linear function from $c(y)$ so that it is bounded
is more complicated.  We prove first that that this is the case 
on a significant fraction of $E_{s_0}$ using the bound on second differences
of $c$.  Then, using this bound again allows us to control $c$ at every point.
A similar estimate already appeared in the works \cite{christ1, christ2},
although here we need to use a different strategy to obtain quantitative 
bounds.

In Step 4 we show that $A^*$ is close 
to a convex set.  The proof relies not only on 
the geometric properties of $A^*$ established in the preceding parts, but
also on a further application of Theorem \ref{thm:main} with $n-1$ to 
the level sets of the functions $a(y)$ and $b(y)$.

Finally, in the last step we show that such a convex set can be 
assumed to be the convex hull of $A$. This will conclude the proof.\\

\subsection*{Step 1: $A$ is close to a $t$-convex set}
We assume that we already proved Theorem \ref{thm:main} through
$n-1$, and we want to show its validity for $n$.
So, let $A\subset \R^n$ be a compact set (recall that, by inner approximation,
it is sufficient to consider this case), and assume 
without loss of generality that $|A|=1$.
We show in this step that, if $\delta(A)$ is sufficiently small 
(the smallness depending only on the dimension),
there exists a $t$-convex set $A^*\subset \R^n$ such that
$$
|A^* \Delta A| \leq C\,\delta(A)^{1/2}.
$$
(Here and in the sequel, $E\Delta F$ denotes the symmetric difference between $E$ and $F$,
that is $E\Delta F=(E\setminus F)\cup(F\setminus E)$.)

\subsubsection*{Step 1-a: Most of the sections $A_y$ are close to their convex hull}
Since $\d(A)$ is finite, it follows from  Lemma \ref{lemma:observation} 
(applied with $k=n-1$,  $\pi = \pi_k$)
that $\sup_{y \in \R^{n-1}}\H^1(A_y)<\infty$ (see \eqref{eq:Ey}).
Hence, up to a linear transformation of the form
$$
(y,t)\mapsto (\lambda y,\lambda^{1-n}t),\qquad \lambda >0,
$$
we can assume that 
\begin{equation}
\label{eq:Ay}
\sup_{y \in \R^{n-1}}\H^1(A_y)=1.
\end{equation}
(Observe that, since the transformation has determinant one, both $|A|$ and $\delta(A)$ are unchanged.)
With this renormalization, using Lemma \ref{lemma:observation}
again, we deduce that 
\begin{equation}
\label{eq:meas piA}
\H^{n-1}\bigl(\pi(A)\bigr) \leq 2^{n+1}
\end{equation}
provided $\delta(A)$
is sufficiently small.

Let us write $\pi(A)$ as $F_1\cup F_2$, where
\begin{equation}
\label{eq:F1F2}
F_1:=\bigl\{y \in \pi(A): \H^1(A_y+A_y)-2\H^1(A_y)< \H^1(A_y)\bigr\},
\qquad F_2:=\pi(A)\setminus F_1.
\end{equation}
Let us notice that, by definition, $A_y\subset \{y\}\times \R \subset \R^n$ (see \eqref{eq:Ey}),
so the following set inclusions hold:
\begin{equation}
\label{eq:inclusions}
2A_y \subset A_y+A_y \subset (A+A)_{2y}.
\end{equation}
(Here and in the sequel we use $2A_y$ to denote
the set $2(A_y)$, which by definition \eqref{eq:def sum}
is a subset of $\{2y\}\times \R$.)
Since by Fubini's Theorem
\begin{equation}
\label{eq:Fubini}
\int_{\R^{n-1}}\H^{1}\bigl((A+A)_{2y}\setminus 2A_y\bigr)\,dy=|A+A|-|2A| = 2^n\delta(A),
\end{equation}
by \eqref{eq:F1F2}, \eqref{eq:inclusions}, and
Theorem \ref{thm:Freiman} applied to each set $A_y\subset\R$, we deduce that 
\begin{equation}
\label{eq:freiman Ay}
\int_{F_1}\H^{1}\bigl(\co(A_y)\setminus A_y\bigr)\,dy+ \int_{F_2}\H^1(A_y)\,dy \leq 2^n\delta(A).
\end{equation}
Let $F_1'\subset F_1$ denote the set of $y \in F_1$ such that 
\begin{equation}
\label{eq:F1'}
\H^{1}\bigl(\co(A_y)\setminus A_y\bigr) +\H^{1}\bigl((A+A)_{2y}\setminus 2A_y\bigr)\leq \d(A)^{1/2},
\end{equation}
and notice that, by \eqref{eq:Fubini}, \eqref{eq:freiman Ay},
and Chebyshev's inequality,
\begin{equation}
\label{eq:F1F1'}
\H^{n-1}(F_1\setminus F_1')\leq C \,\delta(A)^{1/2}.
\end{equation}

\subsubsection*{Step 1-b: Most of
the levels sets $\{\H^1(A_y)>s\}$ are close to their convex hull}
Here we apply the inductive step to the function
$$
y\mapsto \H^1(A_y)
$$
to deduce that most of its level sets are almost convex.
More precisely, let us define
$$
E_s:=\bigl\{y\in \R^{n-1}:\H^1(A_y)>s\bigr\}, \qquad s>0.
$$
Observe that, because of \eqref{eq:Ay}, $E_s$ is empty for $s>1$.
In addition, recalling \eqref{eq:inclusions}, it is immediate to check that
$$
E_s+E_s\subset \bigl\{ y\in \R^{n-1}:\H^1\bigl((A+A)_{2y}\bigr)>2s\bigr\},
$$
so, by Fubini's Theorem and \eqref{eq:Fubini},
\begin{equation}
\label{eq:Es sum}
2\int_0^1 \bigl(\H^{n-1}(E_s+E_s) - \H^{n-1}(2E_s)\bigr) \,ds \leq |A+A|-|2A| = 2^n\delta(A).
\end{equation}
Let $\delta_{n-1}>0$ be given by Theorem \ref{thm:main} with $n-1$ in place of $n$,
and let us partition $[0,1]$ as $G_1\cup G_2$, where
$$
G_1:=\bigl\{s \in [0,1]:\H^{n-1}(E_s+E_s)-2^{n-1}\H^{n-1}(E_s)< 2^{n-1}\delta_{n-1}\H^{n-1}(E_s)\bigr\},
\qquad G_2:=[0,1]\setminus G_1.
$$
Then, by Theorem \ref{thm:main} applied with $n-1$ we get
\begin{equation}
\label{eq:freiman Es}
c_{n-1}^{1/\alpha_{n-1}}\int_{G_1} \frac{\H^{n-1}\bigl(\co(E_s)\setminus E_s\bigr)^{1/\a_{n-1}}}{\H^{n-1}(E_s)^{1/\alpha_{n-1}-1}}\,ds+ 
\delta_{n-1}\int_{G_2}\H^{n-1}(E_s)\,ds 
\leq \delta(A).
\end{equation}
Since $\H^{n-1}(E_s)\leq 2^{n+1}$ for any $s>0$ (by \eqref{eq:meas piA}),
$\int_0^1\H^{n-1}(E_s)\,ds=|A|=1$, 
and $s \mapsto \H^{n-1}(E_s)$ is a decreasing function, we easily deduce that
\begin{equation}
\label{eq:level sets}
1/2\leq \H^{n-1}(E_s) \leq 2^{n+1}\qquad  \forall \,s \in (0,2^{-n-2}).
\end{equation}
Thus, by \eqref{eq:freiman Es}, 
$$
\H^1\bigl(G_2 \cap (0,2^{-n-2})\bigr) < C\,\d(A).
$$
Hence
$$
\H^1\bigl(G_1\cap [10\d(A)^{1/2},20\d(A)^{1/2}]\bigr) \geq 9 \d(A)^{1/2}
$$
for $\delta(A)$ small enough, and
it follows from \eqref{eq:Es sum}, \eqref{eq:freiman Es}, \eqref{eq:level sets},
and Chebyshev's inequality, that
we can find a level $s_0 \in [10\delta(A)^{1/2},20\delta(A)^{1/2}]$
such that
\begin{equation}
\label{eq:Es0}
\H^{n-1}\bigl((E_{s_0}+E_{s_0})\setminus 2E_{s_0}\bigr) \leq C\,\delta(A)^{1/2},
\end{equation}
and
$$
\H^{n-1}\bigl(\co(E_{s_0})\setminus E_{s_0}\bigr)^{1/\alpha_{n-1}} \leq C\,\delta(A)^{1/2},
$$
or equivalently
\begin{equation}
\label{eq:F_1''1}
\H^{n-1}\bigl(\co(E_{s_0})\setminus E_{s_0}\bigr) \leq C\,\delta(A)^{\alpha_{n-1}/2}.
\end{equation} 
Let us define $F_1''$ to be a compact subset of $F_1'\cap E_{s_0}$ which satisfies
\begin{equation}
\label{eq:def F1''}
\H^{n-1}\bigl((F_1'\cap E_{s_0})\setminus F_1''\bigr) \leq \delta(A)^{1/2},
\end{equation}
where $E_{s_0}$ is as in \eqref{eq:F_1''1}. Notice that  
$\H^1(A_y)\geq 10\delta(A)^{1/2}$ for $y \in E_{s_0}$, so
by \eqref{eq:freiman Ay} we get
 $$
 10\,\H^{n-1}(F_2 \cap E_{s_0})\,\delta(A)^{1/2}\leq \int_{F_2\cap E_{s_0}}\H^1(A_y)\,dy \leq 2^n\delta(A),
  $$
hence 
\begin{equation}
\label{eq:Es0F2}
 \H^{n-1}(F_2 \cap E_{s_0}) \leq C\,\delta(A)^{1/2}
 \end{equation}
 Since
 $$
 F_1''\subset E_{s_0}\qquad\text{and}\qquad E_{s_0}\setminus F_1''\subset (F_1\setminus F_1') \cup \bigl((F_1'\cap E_{s_0})\setminus F_1''\bigr)\cup(F_2\cap E_{s_0}),
$$
 combining \eqref{eq:F1F1'}, \eqref{eq:F_1''1}, \eqref{eq:def F1''}, and \eqref{eq:Es0F2},
  we obtain (observing that $\a_{n-1}\leq 1$)
\begin{equation}
\label{eq:coF_1''}
\begin{split}
\H^{n-1}\bigl(\co(F_1'') \setminus F_1''\bigr) &\leq \H^{n-1}\bigl(\co(E_{s_0}) \setminus E_{s_0}\bigr)
+\H^{n-1}(E_{s_0}\setminus F_1'')\\
& \leq C\,\delta(A)^{\alpha_{n-1}/2}.
\end{split}
\end{equation}
Moreover, \eqref{eq:F1F1'},
\eqref{eq:Es0}, \eqref{eq:def F1''}, and \eqref{eq:Es0F2}, give
\begin{equation}
\label{eq:2F_1''}
\begin{split}
\H^{n-1}\bigl((F_1''+F_1'') \setminus 2F_1''\bigr) 
&\leq \H^{n-1}\bigl((E_{s_0}+E_{s_0})\setminus 2 E_{s_0}\bigr)+ 2^{n-1}\H^{n-1}(E_{s_0}\setminus F_1'')\\
&\leq C\,\delta(A)^{1/2}.
\end{split}
\end{equation}

\subsubsection*{Step 1-c: Construction of $A^*$}
Let us define
$$
A^*:= \bigcup_{y \in F_1''} \co(A_y) \subset \R^n.
$$
Observe that, since 
$\H^1(A_y) \leq 20\, \delta(A)^{1/2}$ for 
$y\in F_1'\setminus E_{s_0}$, 
by \eqref{eq:freiman Ay}, \eqref{eq:Ay}, \eqref{eq:meas piA}, \eqref{eq:F1F1'}, and \eqref{eq:def F1''},  we get
\begin{equation}
\label{eq:AA*}
\begin{split}
|A^*\Delta A| &\leq \int_{F_1''}\H^{1}\bigl(\co(A_y)\setminus A_y\bigr)\,dy+
\int_{F_2}\H^1(A_y)\,dy\\
&\qquad
+ \int_{F_1'\setminus E_{s_0}} \H^1(A_y)\,dy
+\int_{F_1\setminus F_1'}\H^1(A_y)\,dy
+ \int_{(F_1'\cap E_{s_0})\setminus F_1''} \H^1(A_y)\,dy\\
&\leq 2^n \d(A) + 20\,\d(A)^{1/2} \H^{n-1}\bigl(\pi(A)\bigr)+
\H^{n-1}(F_1\setminus F_1')+ \H^{n-1}\bigl((F_1'\cap E_{s_0})\setminus F_1''\bigr) \\
&\leq C\,\delta(A)^{1/2}.
\end{split}
\end{equation}
Also, since $\H^1(A_y^*)=\H^1\bigl(\co(A_y)\bigr)\leq 1+\delta(A)^{1/2}$ for all $y \in F_1''$ (see \eqref{eq:Ay} and \eqref{eq:F1'}),
it follows from \eqref{eq:AA*} that 
$$
1-C\,\delta(A)^{1/2}\leq |A^*| =\int_{F_1''} \H^1(A_y^*)\,dy \leq (1+\delta(A)^{1/2}) \H^{n-1}(F_1''),
$$
which implies in particular 
\begin{equation}
\label{eq:meas H}
\H^{n-1}(F_1'') \geq 1/2.
\end{equation}

\subsection*{Step 2: The sections of $A^*$ have
controlled barycenter}
Here we show that, if we write $A_y^*$ as $\{y\}\times [a(y),b(y)]$ (recall that $A_y^*=\co(A_y)$ is a segment)
and we define $c:F_1''\to \R$ to be the barycenter of $A_y^*$, that is
\[
c(y):=\frac{a(y)+b(y)}{2} \qquad \forall\, y \in F_1'',
\]
then 
\begin{equation}
\label{eq:c}
|c(y') + c(y'') - 2c(y)|\leq 6 \qquad \forall\,y,y',y'' \in F_1'',\, \,
y=\frac{y'+y''}2. 
\end{equation} 

\subsubsection*{Step 2-a: Some geometric properties of $A^*$}

First of all, since 
$$
(A+A)_{2y}=\bigcup_{2y=y'+y''} A_{y'}+A_{y''},
$$
by \eqref{eq:F1'}
we get
\begin{equation}
\label{eq:sum Ay}
\H^1\biggl(\Bigl(\bigcup_{2y=y'+y''} A_{y'}+A_{y''}\Bigr) \setminus 2A_y\biggr) \leq  \delta(A)^{1/2}\qquad \forall\,y \in F_1''.
\end{equation}
Also,
if we define the characteristic functions
$$
\chi_y(t):=\left\{
\begin{array}{ll}
1 & \text{if }(y,t) \in A_y\\
0 & \text{otherwise},
\end{array}
\right.\qquad 
\chi_y^*(t):=\chi_{[a(y),b(y)]}(t)=\left\{
\begin{array}{ll}
1 & \text{if }(y,t) \in A_y^*\\
0 & \text{otherwise},
\end{array}
\right.\qquad 
$$
then by 
\eqref{eq:F1'} we have
the following estimate on the convolution of the functions $\chi_y$ and $\chi_y^*$:
\begin{equation}
\label{eq:close Linfty}
\begin{split}
\|\chi_{y'}^*\ast \chi_{y''}^* - \chi_{y'}\ast \chi_{y''}\|_{L^\infty(\R)}&\leq
\|\chi_{{y''}}^* -  \chi_{{y''}}\|_{L^1(\R)}
+ 
\|\chi_{{y'}}^* -  \chi_{{y'}}\|_{L^1(\R)}\\
& \leq
\H^1\bigl(\co(A_{y''})\setminus A_{y''}\bigr)+
\H^1\bigl(\co(A_{y'})\setminus A_{y'}\bigr)\\
& < 3 \delta(A)^{1/2}\qquad \forall\,y',y''\in F_1''.
\end{split}
\end{equation}
Let us define $\bar \pi:\R^n \to \R$ to be the orthogonal projection onto the last 
component, that is $\bar\pi(y,t):=t$,
and denote by $[a,b]$ the interval $\bar\pi(A^*_{y'}+A^*_{y''})$. 
Notice that, since by construction
$\H^1(A_z)\geq 10\delta(A)^{1/2}$ for any $z \in F_1''$, this interval has length greater than $20\delta(A)^{1/2}$. Also,
it is easy to check that
the function $\chi_{{y'}}^*\ast \chi_{{y''}}^*$ is supported on $[a,b]$, has slope equal to $1$ (resp. $-1$)
inside $[a,a+3\delta(A)^{1/2}]$ (resp. $[b-3\delta(A)^{1/2},b]$),
and it is greater than $3\delta(A)^{1/2}$ inside $[a+3\delta(A)^{1/2},b-3\delta(A)^{1/2}]$.
Since $\bar\pi(A_{y'}+A_{y''})$ contains the set $\{\chi_{y'}\ast\chi_{y''}>0\}$, 
by \eqref{eq:close Linfty} we deduce that
\begin{equation}
\label{eq:A*y'}
\bar\pi(A_{y'}+A_{y''}) \supset [a+3\delta(A)^{1/2}, b-3\delta(A)^{1/2}].
\end{equation}
We claim that if $2y=y'+y''$ and $y,y',y'' \in F_1''$, then
\begin{equation}
\label{eq:y'y''y}
[a(y')+a(y''),b(y')+b(y'')] \subset [2a(y) - 16 \delta(A)^{1/2},2b(y)
+ 16\delta(A)^{1/2}].
\end{equation}
Indeed, if this was false, since 
$
[a(y')+a(y''),b(y')+b(y'')] =\bar\pi(A^*_{y'}+A^*_{y''})=:[a,b]$ is an interval of 
length  at least $20\delta(A)^{1/2} \geq 16\delta(A)^{1/2}$, it follows that
$$
\H^1\bigl([a,b] \setminus  [2a(y),2b(y)]\bigr)
\geq 16 \delta(A)^{1/2}.
$$
This implies that
$$
\H^1\bigl([a+3\delta(A)^{1/2}, b-3\delta(A)^{1/2}] \setminus 
 [2a(y),2b(y)]\bigr) \geq 10\delta(A)^{1/2},
$$
so by \eqref{eq:A*y'} we get (recall that $ [2a(y),2b(y)]=\bar\pi(2A^*_y)$)
\begin{equation}
\H^1\bigl((A_{y'}+A_{y''})\setminus 2A^*_y\bigr) \geq 10 \delta(A)^{1/2}.
\end{equation}
However, since $A^*_y=\co(A_y)\supset A_y$, this contradicts \eqref{eq:sum Ay}
proving the claim \eqref{eq:y'y''y}. 

\subsubsection*{Step 2-b: Estimating the second differences of $c$}
Because of \eqref{eq:Ay} and \eqref{eq:F1'}, each set $A_y^*$ is
an interval of length at most $2$.
Hence \eqref{eq:c} follows easily from \eqref{eq:y'y''y}.

\subsection*{Step 3:  After a volume-preserving affine transformation, $A^*$ is 
universally bounded}
We show that there exist linear maps $T:\R^{n-1}\to \R^{n-1}$ and $L:\R^{n-1}\to \R$,
with $\det(T)=1$, and a point $(y_0,t_0) \in \R^n$, such that the image of 
$A^*$ under the affine transformation
\begin{equation}
\label{eq:affine}
\R^{n-1} \times \R \ni (y,t) \mapsto (Ty,t-Ly)+(y_0,t_0)
\end{equation}
is universally bounded. 
Notice that such transformation has unit determinant.

\subsubsection*{Step 3-a: After a volume-preserving affine transformation, $\pi(A^*)$ is universally bounded}
We claim that, after an affine transformation of the form
$$
\R^{n-1} \times \R \ni (y,t) \mapsto (Ty,t)+(y_0,0)
$$
with $T:\R^{n-1}\to \R^{n-1}$ linear and $\det(T)=1$, we can assume that
\begin{equation}
\label{eq:Ks ball}
B_{r} \subset \co(F_1'')\subset B_{(n-1)r},
\end{equation}
where $B_{r}\subset \R^{n-1}$ is the $(n-1)$-dimensional ball
of radius $r$ centered at the origin, and $1/C_n < r < C_n$ for a constant
$C_n$ depending only on the dimension.
Since $\pi(A^*)=F_1'' \subset \co(F_1'')$, this proves in particular the boundedness of $\pi(A^*)$.

Indeed, by John's Lemma \cite{john} applied to the convex set $\co(F_1'')$,
there exist a linear map $T:\R^{n-1}\to \R^{n-1}$
with $\det(T)=1$, and a point $y_0 \in \R^{n-1}$, such that
$$
B_{r} \subset T(\co(F_1''))+y_0\subset B_{(n-1)r}.
$$

Since 
$\H^{n-1}\bigl(T(\co(F_1''))\bigr)=\H^{n-1}\bigl(\co(F_1'')\bigr)   
\geq \H^{n-1}(F_1'')\geq 1/2$ (see \eqref{eq:meas H}), we deduce that $r$ is universally bounded
from below.  For the upper bound note that, by \eqref{eq:coF_1''} and \eqref{eq:meas piA},
 the volume of 
$\co(F_1'')$ is bounded above, 
proving the claim.

\subsubsection*{Step 3-b: Selecting some ``good'' points $y_1,\ldots,y_n$ inside $F_1''$}
We claim there exists a dimensional constant $c_n>0$ such that the following holds:
we can find $n$ points $y_1,\ldots,y_n\in B_r$, with $r>0$
as in \eqref{eq:Ks ball}, such that the following conditions are satisfied:
\begin{enumerate}
\item[(a)] All points
$$
y_1,\ldots,y_n\quad \text{and} \quad \frac{y_1+y_2}2
$$
belong to $F_1''$.
\item[(b)] Let $\Sigma_i$, $i=1,\, \dots, n$, denote the $(i-1)$-dimensional 
simplex generated by $y_1,\ldots, y_i$, 
and define $\Sigma_i':=\frac12(\Sigma_i + y_{i+1})$, $i=1,\, \dots,\,  n-1$. 
Then
\begin{enumerate}
\item[(i)]
$$
\H^{i-1}(\Sigma_i) \geq c_n,\quad \frac{\H^{i-1}(\Sigma_i\cap F_1'')}{\H^{i-1}(\Sigma_i)} \geq 1-\delta(A)^{\alpha_{n-1}/3}, 
\qquad \forall\,i=2,\ldots,n;
$$
\item[(ii)]
$$
\frac{\H^{i-1}\left(\Sigma_i'\cap F_1''\right)}{\H^{i-1}\left(\Sigma_i'\right)} \geq 1-\delta(A)^{\alpha_{n-1}/3}\qquad \forall\,i=2,\ldots,n-1.
$$
\end{enumerate}
\end{enumerate}

To see this, observe that
$\H^{n-1}(B_r\setminus F_1'') \leq C \,\delta(A)^{\alpha_{n-1}/2}$ (by \eqref{eq:coF_1''}
and \eqref{eq:Ks ball}),
which implies that, for any point $y \in B_{r}\cap F_1''$,
$$
\H^{n-1}\left(B_{r}\cap F_1'' \cap \bigl(2(B_{r}\cap F_1'')-y\bigr)\right) \geq \H^{n-1}\left(B_{r}\right)-3C \,\delta(A)^{\alpha_{n-1}/2}.
$$
Hence, for $\{y_i\}_{i=1,\ldots,n}$ to satisfy (a) above, we only need to ensure that
$$
y_i \in F_1'',\quad y_2 \in F_1'' \cap (2F_1''-y_1),
$$  
while property (b) means that each simplex $\Sigma_i$ has substantial measure (i.e., the points $\{y_i\}_{i=1,\ldots,n}$
are not too close to each other), and most of the points in $\Sigma_i$ and $\Sigma_i'$ belong to $F_1''$.
Since $\Sigma_i'=\frac12(\Sigma_i + y_{i+1})$, this is equivalent to the fact that $\Sigma_i$
intersect $F_1'' \cap (2F_1''-y_{i+1})$ in a large fraction.

Since all sets $\bigl\{F_1'' \cap (2F_1''-y_i)\bigr\}_{i=1,\ldots,n}$ cover $B_r$ up to a set of measure $C \,\delta(A)^{\alpha_{n-1}/2}
\ll \delta(A)^{\alpha_{n-1}/3}$, by a simple Fubini argument
we can choose the points $\{y_i\}_{i=1,\ldots,n}$
so that both (a) and (b) are satisfied (we leave the details to the reader).

\subsubsection*{Step 3-c: A second volume-preserving affine transformation}
Let $y_1,\ldots,y_n$ be the points constructed in Step 3-b.
We may apply an affine transformation of the form
$$
\R^{n-1} \times \R \ni (y,t) \mapsto (y,t-Ly)+(0,t_0)
$$
where 
$L:\R^{n-1}\to \R$ is linear and $t_0 \in \R$, in order to
assume that 
$$
(y_k,0) \in   A^*, \quad k= 1, \ldots, n \, .
$$
We now prove that $A^*$ is universally bounded.

\subsubsection*{Step 3-d: A nontrivial fraction of $A^*$ is bounded}
We start by iteratively applying Lemma \ref{lemma:1d}(ii): Because
$(y_i,0) \in A^*$ for all $i=1,\ldots,n$ and $A^*_y$ has length at most $2$ for any $y \in F_1''$
(see \eqref{eq:Ay} and \eqref{eq:F1'}),
we deduce that 
$$
|c(y_i)| \leq 1 \qquad \forall\,i=1,\ldots,n.
$$
(recall that $c(y_i)$ is the barycenter of $A_{y_i}^*$ ).
Also, by (a) in Step 3-b we know that $\frac12(y_1+y_2)\in F_1''$, and
by (b)-(i) most of the points in the segment
$\Sigma_2=[y_1,y_2]$ belong to $F_1''$.
So, thanks to \eqref{eq:c}, we can apply Lemma \ref{lemma:1d}(ii)
to the function $[0,1]\ni \tau \mapsto \frac{1}{3}c(\tau y_1+(1-\tau)y_2)$ and deduce
that $c$ is universally bounded on $\Sigma_2\cap F_1''$.

We now use both (b)-(i) and (b)-(ii) to iterate this construction: since $c$
is universally bounded on $\Sigma_2\cap F_1''$ and at $y_3$, for any point $z \in \Sigma_2\cap F_1''$ such that
$\frac12(z+y_3) \in \Sigma_2'\cap F_1''$ (these are  most of the points)
we can apply again Lemma \ref{lemma:1d}(ii) to the function $[0,1]\ni \tau \mapsto \frac{1}{3}c(\tau z+(1-\tau)y_3)$
to deduce that $c$ is universally bounded on the set $[z,y_3]\cap F_1''$.
Hence, we proved that $c$ is universally bounded on the set
$$
\Sigma_3'':=\bigcup_{z \in (\Sigma_2\cap F_1'')\cap \left(2(\Sigma_2'\cap F_1'')-y_3\right)}[z,y_3]\cap F_1''.
$$
Notice that, thanks to (b) above,
$\H^2(\Sigma_3 \setminus \Sigma_3'') \leq C\,\delta(A)^{\alpha_{n-1}/3}$.

Continue to iterate this construction by picking a point $z \in \Sigma_3''$
such that $\frac12(z+y_4) \in \Sigma_3'$, applying again 
Lemma \ref{lemma:1d}(ii) to the segments
$[z,y_4]$, and so on.
After $n-1$ steps we finally obtain a set $\Sigma_{n}''\subset F_1''$ such that
$\H^{n-1}(\Sigma_n\setminus \Sigma_{n}'')\leq C\,\delta(A)^{\alpha_{n-1}/3}$ and $c$ is universally bounded on $\Sigma_{n}''$.
Thanks to (a), this implies in particular that $\H^{n-1}(\Sigma_{n}'')\geq c_n/2$ 
provided $\delta(A)$ is sufficiently small.

\subsubsection*{Step 3-e: $A^*$ is bounded}
Since $A^*_y$ is a segment of length at most $2$ for any $y \in F_1''$,
we only need to prove that $c(y)$ is universally bounded for any $y \in F_1''$.

Fix $\bar y \in F_1''$. Since $F_1''$ is almost of full measure inside its convex hull (see \eqref{eq:coF_1''}),
$\co(F_1'')$ is universally bounded (see Step 3-a),
and $\Sigma_{n}''$ is a simplex inside $\co(F_1'')$ of non-trivial measure, by a simple Fubini
argument we can find a point $\bar y' \in F_1''\cap (2F_1'' - \bar y)$ such that 
most of the points on the segment $[\bar y,\bar y']$ belong to $F_1''$,
and in addition $\H^1\bigl([\bar y,\bar y']\cap \Sigma_{n}'' \bigr) \geq c_n'$
for some dimensional constant $c_n'>0$.

By applying Lemma \ref{lemma:1d}(i) to the function $[0,1]\ni \tau \mapsto \frac{1}{3}c(\tau \bar y+(1-\tau)\bar y')$, we deduce that $|c-\ell| \leq 3M$ on $[\bar y,\bar y']\cap  F_1''$ for some
linear function $\ell$.
However, we already know that $c$ is universally bounded on $[\bar y,\bar y']\cap \Sigma_{n}''$,
so $\ell$ is universally bounded there. Since
this set has non-trivial measure, this implies that $\ell$ has to be universally bounded
on the whole segment $[\bar y,\bar y']$ (since $\ell$ is a linear function).
Hence $c$ is universally bounded on $[\bar y,\bar y']\cap  F_1''$ as well,
and this provides a universal bound for $c(\bar y)$, concluding the proof.\\

\begin{remark}
\label{rmk:A bdd}
From the boundedness of $A^*$ we can easily prove that,
after the affine transformation described above,
 $A$ is bounded as well.  
More precisely, let $R>0$ be such that $A^*\subset B_R$.
We claim that (for $\d(A)$ sufficiently small)
\begin{equation}\label{eq:A bounded}
A \subset B_{3R}.
\end{equation}
Indeed, we deduce from $|A\setminus A^*| \le C\d(A)^{1/2}$ (see \eqref{eq:AA*}) 
that  $|A \setminus B_R|  \le C\d(A)^{1/2}$.    Hence, if 
 there is a point $x\in A \setminus B_{3R}$, since
 \[
\frac{A+A}{2} \supset  (A\cap B_R) \cup \biggl(\frac{(A\cap B_R)+x}{2}\biggr),
\]
and the two sets in the right hand side are disjoint, we get
\[
\biggl|\frac{A + A}{2}\biggr| \geq\bigl(1+2^{-n}\bigr)\bigl(1-C\d(A)^{1/2}\bigr)|A|
\]
which implies $\d(A) \ge 2^{-n-1}$, a contradiction.
\end{remark}

\subsection*{Step 4: $A^*$ is close to a convex set}
We show that  there exists a convex set
$\K\subset \R^n$ such that
\begin{equation}
\label{eq:A*K}
|\K \Delta A^*|\leq C \,\delta(A)^{\frac{\alpha_{n-1}\sigma_n}{8(n-1)}}.
\end{equation}
Before beginning the proof, let us recall some of the main properties of $A^*$
that we proved so far, and which will be used in the argument below.

First of all, $A^*$ is a $t$-convex set of the form
$$
A^*=\bigcup_{y \in F_1''}\{y\}\times [a(y),b(y)],
$$
where $F_1''$ is compact (see \eqref{eq:def F1''}), it is close to its convex hull $\co(F_1'')$
(see \eqref{eq:coF_1''}),
and $F_1''+F_1''$ is even closer to $2F_1''$ (see \eqref{eq:2F_1''}).
In addition, by Step 3,
up to an affine transformation as in \eqref{eq:affine}
we can assume that $\co(F_1'')$ is comparable to a ball whose radius is
bounded from above and below by two dimensional constants  (see \eqref{eq:Ks ball}),
and that $A^*\subset B_{(n-1)r}\times [-M,M]$  for some $M>0$ universal.
Finally, $a(y)$ and $b(y)$ satisfy \eqref{eq:y'y''y}.

In order to simplify the notation, we denote $\Omega:=\co(F_1'')$
and $F:=F_1''$.
Hence, by what we just said, 
$$
A^*=\bigcup_{y \in F}\{y\}\times [a(y),b(y)],\qquad \text{$F$ compact,}
$$
\begin{equation}
\label{eq:coF}
\Omega=\co(F),\quad
 \H^{n-1}(\Omega\setminus F) \leq
C\,\delta(A)^{\a_{n-1}/2},
\end{equation}
\begin{equation}
\label{eq:FF}
\ \H^{n-1}\bigl((F+F)\setminus 2F\bigr) \leq C\,\delta(A)^{1/2},
\end{equation}
\begin{equation}
\label{eq:Ks ball 2}
B_r\subset \Omega \subset B_{(n-1)r},\qquad 1/C_n <r<C_n,
\end{equation}
\begin{equation}
\label{eq:abM}
-M\leq a(y) \leq b(y) \leq M\qquad \forall\,y \in F,
\end{equation}
and 
\begin{equation}
\label{eq:ab}
a\biggl(\frac{y'+y''}2\biggr) - 8 \delta(A)^{1/2}\leq  \frac{a({y'})+a({y''})}{2} \leq \frac{b({y'})+b({y''})}{2} \leq b\biggl(\frac{y'+y''}2\biggr) + 8 \delta(A)^{1/2}
\end{equation}
whenever $y',y'',\frac{y'+y''}{2} \in F$.

Our goal is to show that $b$ (resp. $a$) is $L^1$-close to a concave (resp. convex) function
defined on $\Omega.$ Being the argument completely symmetric, we focus just on $b$.

\subsubsection*{Step 4-a: Making $b$ uniformly concave at points that are well separated}
Let $\beta\in(0,1/6]$ to be fixed later, and define $\var :\Omega \to \R$ as 
\begin{equation}
\label{eq:var}
\var(y):=
\left\{
\begin{array}{ll}
b(y) +2M - 20\,\delta(A)^{\beta}|y|^2& y \in F,\\
0 &y \in \Omega\setminus F.
\end{array}
\right.
\end{equation}
Notice that, because of \eqref{eq:abM} and \eqref{eq:ab}, we have
$0 \leq \var \leq 3M$ and
$$
\frac{\var(y')+\var(y'')}{2} \leq  \var \biggl(\frac{y'+y''}2\biggr)
+8\,\delta(A)^{1/2}
- 5\,\delta(A)^{\beta}|y'-y''|^2 \qquad \forall\,\,y',y'',\frac{y'+y''}{2} \in F,
$$
which implies in particular that
\begin{equation}
\label{eq:barvar}
\frac{\var(y')+\var(y'')}{2} \leq  \var \biggl(\frac{y'+y''}2\biggr)
+8\,\delta(A)^{1/2} \qquad \forall\,\,y',y'',\frac{y'+y''}{2} \in F,
\end{equation}
and (since $\beta \leq 1/6$)
\begin{equation}
\label{eq:barvar2}
\frac{\var(y')+\var(y'')}{2} < \var \biggl(\frac{y'+y''}2\biggr)-\delta(A)^{\beta}|y'-y''|^2 \qquad \forall\,\,y',y'',\frac{y'+y''}{2} \in F,\,\,|y'-y''| \geq 2\delta(A)^{\beta},
\end{equation}
that is $\var$ is  uniformly concave on points of $F$ that are at least $2\delta(A)^{\beta}$-apart.

\subsubsection*{Step 4-b: Constructing a concave function that should be close to $\var$}
Let us take $\gamma \in (0,1/4]$ to be fixed later,
and define
$$
\bar \var (y):=\min\{\var(y),h\},
$$
where $h \in [0,3M]$ is given by 
\begin{equation}
\label{eq:def h}
h:=\inf\bigl\{t>0 : \H^{n-1}(\{\var >t\})
\leq \delta(A)^{\gamma}\bigr\}.
\end{equation}
Since $0\leq \var \leq 3M$, we get
\begin{equation}
\label{eq:var bar}
\int_{\Omega} [\var(y) - \bar \var(y)]\,dy =\int_h^{3M} \H^{n-1}(\{\var>s\})\,ds \leq  3M \delta(A)^{\gamma}. 
\end{equation}
Notice that $\bar \var$ still satisfies \eqref{eq:barvar},
and it also satisfies \eqref{eq:barvar2} whenever
$\var((y'+y'')/2) = \bar\var((y'+y'')/2) < h$.

Finally, we define $\Phi:\Omega\to [0,h]$ to be the concave envelope of 
$\bar\var$, that is, the infimum
among all linear functions that are above $\bar\var$ in $\Omega$.
Our goal is to show that $\Phi$ is $L^1$-close to $\bar\var$ (and hence to $\var$).

\subsubsection*{Step 4-c: The geometry of contact sets of supporting hyperplanes}
Let $y$ belong to the interior of $\Omega$, and let $L$
be the linear function representing the supporting hyperplane for $\Phi$ at $y$, that is,
$L\ge \Phi$ in $\Omega$, and  $L(y) = \Phi(y)$.

Let $X:=\{\Phi=L\}\cap \Omega$. Observe that $X$ is a convex compact set (since $\Omega$
is convex and compact, being the convex hull of the compact set $F$) and $y \in X$.
Since $\Phi$ is the concave envelope of $\bar\var$, by
Caratheodory's theorem \cite[Theorem 1.1.4]{Sch}
there are $m$ points $y_1, \ldots, y_m \in X$, with $m \leq n$, 
such that $y \in \co(\{y_1, \ldots, y_m\})$ 
and all $y_j$'s are contact points:
\[
\Phi(y_j) =L(y_j)= \bar \var(y_j),  \qquad j = 1, \ldots, m.
\]
Observe that, because of \eqref{eq:abM} and \eqref{eq:var}, $\var>0$ on $F$,
and $\var=0$ on $\Omega\setminus F$.  We show next that $y_j\in F$ for all $j$.  

Fix $j$.  
Since $\Omega=\co(F)$ and $F$ is compact, we can apply Caratheodory's
theorem again to find $\ell$ points $z_1,\ldots,z_{\ell} \in F$, 
with $\ell \leq n$, such that $y_j\in \co(\{z_1,\ldots,z_\ell\})$.
But $z_i\in F$ implies $\Phi(z_i)\geq \bar\var(z_i) >0$, and hence  by concavity $\Phi(y_j)>0$. 
It follows that $\bar \var(y_j)= \Phi(y_j)>0$, and therefore $y_j\in F$.

In summary, every point in the interior of $\Omega$ belongs to a simplex $S$ 
such that
$$
S:=\co (\{y_1,\ldots, y_m\}),\qquad y_j \in \{\Phi = L =\bar \var\} \cap F,\qquad m\leq n.
$$

\subsubsection*{Step 4-d: The set $\{\Phi=\bar\var\}$ is $K\d(A)^\beta$ 
dense in $\Omega \setminus \co(\{\bar\var > h-K\d(A)^\beta\})$}

Let $\beta \in (0,1/6]$ be as in Step 4-b. We claim that there exists a dimensional constant $K>0$
such that the following hold, provided $\beta$ is sufficiently small (the smallness depending only on the dimension): For any $y \in \Omega$:\\
- either there is $x \in \{\Phi=\bar\var\}\cap \Omega$ with $|y-x|\leq K\delta(A)^\beta$;\\
- or $y$ belongs to 
the convex hull of the set $\{\bar\var > h-K\delta(A)^\beta\}$.

To prove this, we define
$$
\Omega_\beta:=\bigl\{y \in \Omega:\dist\bigl(y,\partial \Omega\bigr) \geq \delta(A)^\beta\bigr\}.
$$
Of course, up to enlarge the value of $K$, it suffices to consider the case when $y\in \Omega_\beta$. 

So, let us fix $y \in \Omega_\beta$.
Since $\Omega$ is a convex set comparable to a ball
of unit size (see \eqref{eq:Ks ball 2}) and $\Phi$ is 
a nonnegative concave function bounded by $3M$ inside $\Omega$,
there exists a dimensional constant $C'$ such that, for every linear 
function $L\ge \Phi$ satisfying $L(y) = \Phi(y)$, we have
\begin{equation}
\label{eq:bound gradient f}
|\nabla L| \leq \frac{C'}{\delta(A)^\beta} .
\end{equation}
By Step 4-c, there are $m\leq n$ points $y_1, \ldots, y_{m} \in F$
such that $y \in S:=\co(\{y_1, \ldots, y_{m}\})$, and
all $y_j$'s are contact points:
\[
\Phi(y_j) =L(y_j)= \bar \var(y_j),  \qquad j = 1, \ldots, m.
\]
If the diameter of $S$ is less than $K\d(A)^\beta$, then its
vertices are contact points
within $K\d(A)^\beta$ of $y$ and we are done. 

Hence, let us assume that the diameter
of $S$ is at least $K\d(A)^\beta$.
We claim that 
\begin{equation}\label{eq:x_i}
\bar\var(y_i) > h-K\delta(A)^\b\qquad \forall\, i =1,\ldots,m.
\end{equation}
Observe that, if we can prove \eqref{eq:x_i}, then
$$
y\in S\subset 
\co(\{\bar\var > h-K\delta(A)^\beta\}),
$$
and we are done again.

It remains only to prove \eqref{eq:x_i}.   To begin the proof, given 
$i \in \{1,\ldots,m\}$, take
$j \in \{1,\ldots,m\}$
such that $|y_i-y_j| \ge K\d(A)^\beta/2$ (such a $j$ always exists because of the assumption on the diameter of $S$).  We rename $i=1$ and
$j=2$.  

Let $N \in \N$ to be chosen, and for $x\in \Omega$ define
$$
H_N(x) :=F\cap( {2F-x} )\cap (4F - 3x) \cap \ldots \cap (2^{N}F-(2^{N}-1)x) = \bigcap_{k=0}^N (2^kF - (2^k-1)x).
$$
Observe that, since $\Omega$ is convex,
\begin{align*}
\H^{n-1}\bigl(\Om \setminus (2^kF - (2^k-1)x)\bigr) &
= 2^{k(n-1)} \H^{n-1}\bigl((2^{-k}\Om + (1-2^{-k})x)\setminus F\bigr) \\
&\leq 
2^{k(n-1)} \H^{n-1}(\Om\setminus F),
\end{align*}
so, by \eqref{eq:coF},
\begin{equation}
\label{eq:HN}
\H^{n-1} \bigl(\Om \setminus H_N(x)\bigr) \le \sum_{k=0}^N2^{k(n-1)} \,\H^{n-1}(\Om\setminus F) \leq C\, 2^{N(n-1)}\d(A)^{\a_{n-1}/2}.
\end{equation}

Let $w_0 \in H_N(y_2)$, and define $w_k := \frac12(y_2+ w_{k-1})$, $k=1,\, 2, \, 
\ldots, \, N$. Since $w_0 \in H_N(y_2)$ we have
\begin{equation}
\label{eq:wk}
w_k = (1-2^{-k})y_2 + 2^{-k}w_0
\in F\qquad \forall\,k=0,\ldots,N.
\end{equation}
Then, since $y_2 \in F$ and by \eqref{eq:wk},
we can apply iteratively \eqref{eq:barvar} to get (recall that $0\leq \bar \var\leq 3M$)
\begin{equation}
\label{eq:varz1}
\begin{split}
\bar \var(w_N)&\geq \bar \var(y_2)/2+\bar\var(w_{N-1})/2-8\delta(A)^{1/2}\\
&\geq  (1-1/4)\bar \var(y_2)+\bar\var(w_{N-2})/4-\bigl(1+1/2\bigr)\,8\delta(A)^{1/2}\\
&\geq \ldots\\
&\geq (1-2^{-N})\bar\var(y_2)+2^{-N}\bar\var(w_0)- 16\delta(A)^{1/2}\\
&\geq (1-2^{-N})\bar\var(y_2) -16\delta(A)^{1/2}\\
&\geq \bar\var(y_2)-C\,\left(2^{-N}+\delta(A)^{1/2}\right).
\end{split}
\end{equation}
In addition, since  the diameter of $F$ is bounded (see \eqref{eq:coF} and
\eqref{eq:Ks ball 2}), $|w_N- y_2|\leq C\, 2^{-N}$. 

Let us choose $N$ such that
$2^N=  c' \delta(A)^{-\frac{\alpha_{n-1}}{2(n-1)}}$ for some small dimensional constant $c'>0$.
In this way, from \eqref{eq:Ks ball 2} and \eqref{eq:HN} we get
\[
\H^{n-1}\bigl(H_N(y_2)\bigr) \ge c_n/2,
\]
which implies 
$$
\H^{n-1}\bigl((1-2^{-N})y_2+2^{-N}H_N(y_2)\bigr) \geq 2^{-(n-1)N}c_n/2= \frac{c_n}{2(c')^{n-1}}\delta(A)^{\alpha_{n-1}/2}.
$$
Hence, since by convexity of $\Omega$ and \eqref{eq:coF}
$$
\H^{n-1}\bigl(\Omega \setminus \bigl(F\cap(2F-y_1)\bigr)\bigr) \leq 3
\H^{n-1}(\Omega \setminus F)\leq 3C\,\delta(A)^{\alpha_{n-1}/2},
$$
we see that the set 
$F\cap (2F-y_1)\cap \bigl((1-2^{-N})y_2+2^{-N}H_N(y_2)\bigr)$ is nonempty provided $c'$ is sufficiently small.
So, let $x_2$ be an arbitrary point inside this set.
Observe that, with this choice, 
$$
|x_2-y_2|\leq  C\,\delta(A)^{\frac{\alpha_{n-1}}{2(n-1)}},\qquad z_1:=\frac{y_1+x_2}{2} \in F.
$$
We now prove \eqref{eq:x_i}: since $L$ has gradient of order at most $C\,\d(A)^{-\beta}$ (see \eqref{eq:bound gradient f}), we have
\[
|L(z_{1}) - L((y_1+y_2)/2)| \le C\,\d(A)^{-\beta}|x_2-y_2| \leq C\,\delta(A)^{\frac{\alpha_{n-1}}{2(n-1)}-\beta}
\]
Hence, since $y_1$ and $y_2$ are contact points and $L \geq \bar\var$,
using \eqref{eq:varz1} we get
\begin{equation}
\label{eq:mid}
\begin{split}
\frac{\bar\var(y_1) + \bar\var(x_2) }{2}
& \ge \frac{\bar\var(y_1) + \bar\var(y_2) }{2}-C\,\delta(A)^{\frac{\alpha_{n-1}}{2(n-1)}}\\
&= L((y_1+y_2)/2)-C\,\delta(A)^{\frac{\alpha_{n-1}}{2(n-1)}}\\
&\geq L(z_1) -C\,\delta(A)^{\frac{\alpha_{n-1}}{2(n-1)}-\beta}\\
&\geq \bar\var(z_1) -C\,\delta(A)^{\frac{\alpha_{n-1}}{2(n-1)}-\beta}.
\end{split}
\end{equation}
We now claim that, for some suitable choice of $K>0$ and $\beta \in \left(0,\frac{\alpha_{n-1}}{4(n-1)}\right]$, we can infer that
$\bar\var(z_1)=h$.
Observe that, if we can do so, then since $\bar\var \leq h$ is follows immediately
from \eqref{eq:mid} that both $\bar\var(y_1)$ and $\bar\var(x_2)$ have to be greater
than $h-C\,\delta(A)^{\frac{\alpha_{n-1}}{2(n-1)}-\beta} \geq h-C\,\delta(A)^\beta$, proving \eqref{eq:x_i}.

So, let us show that  $\bar\var(z_1)=h$. If not, we could apply
\eqref{eq:barvar2} with $y'=y_1$, $y''=x_2$, and $\var=\bar\var$, to get
$$
\bar\var(z_{1}) 
\ge \frac12(\bar\var(y_1) + \bar\var(x_2)) + \d(A)^\beta|y_1-x_2|^2.
$$
Since $|y_1-x_2| \geq |y_1-y_2|/2 \geq K\delta(A)^\beta/4$, this implies that
$$
\bar\var(z_{1}) 
\ge \frac12(\bar\var(y_1) + \bar\var(x_2)) + \frac{K^2}{16}\d(A)^{3\beta},
$$
which contradicts \eqref{eq:mid} provided we choose
$\beta= \frac{\alpha_{n-1}}{8(n-1)}$ and $K$ sufficiently large.

This concludes the proof with the choice
\begin{equation}
\label{eq:beta}
\beta:=\frac{\alpha_{n-1}}{8(n-1)}.
\end{equation}

\subsubsection*{Step 4-e: Most of the level sets of $\bar\var$ are close to their convex hull}

Let us now define the nonnegative function $\psi:\Omega\to \R$ as
$$
\psi(y):=\sup_{y'+y''=2y,\,\,y',y'' \in \Omega} \min\{\bar\var(y'),\bar\var(y'')\}.
$$
Notice that:\\
- $0 \leq \bar \var \leq \psi$ (just pick $y'=y''=y$);\\
- $\psi\leq \bar\var + 8\d(A)^{1/2}$ on $F$,
and $\psi=0$ outside $(F+F)/2$
(since by \eqref{eq:var} we have $\min\{\bar\var(y'),\bar\var(y'')\}=0$
unless both $y'$ and $y''$ belong to $F$, and then use \eqref{eq:barvar});\\
- $\psi \leq h$ (since $\bar \var \leq h$).\\
Thus, thanks to \eqref{eq:FF} we get
\begin{equation}
\label{eq:var psi}
\begin{split}
\int_{\Omega} \bar\var(y)\,dy &\leq \int_{\Omega} \psi(y)\,dy =\int_{(F+F)/2}\psi(y)\,dy\\
&\leq \int_{F} \bar\var(y)\,dy +
8\delta(A)^{1/2}\H^{n-1}(F)+h\,\H^{n-1}\bigl((F+F)\setminus 2F\bigr)\\
&\leq \int_{\Omega} \bar\var(y)\,dy+ C \,\d(A)^{1/2}.
\end{split}
\end{equation}
Since $\{\psi>s\}\supset \frac{\{\bar\var>s\}+\{\bar\var>s\}}{2}$,
we can apply Theorem \ref{thm:main} with $n-1$ to the level sets of $\bar\var$:
if we define
$$
H_1:=\bigl\{s :\H^{n-1}(\{\psi>s\})-\H^{n-1}(\{\bar\var>s\})<\delta_{n-1}\H^{n-1}(\{\bar\var>s\})\bigr\},
\qquad H_2:=[0,h]\setminus H_1,
$$
by \eqref{eq:var psi} and Fubini's Theorem we get
$$
c_{n-1}^{1/\alpha_{n-1}}\int_{H_1} \frac{\H^{n-1}\bigl(\co(\{\bar\var >s\})\setminus \{\bar\var >s\}\bigr)^{1/\a_{n-1}}}{\H^{n-1}(\{\bar\var >s\})^{1/\alpha_{n-1}-1}}\,ds+ \delta_{n-1}\int_{H_2}\H^{n-1}(\{\bar\var >s\})\,ds \leq C \,\d(A)^{1/2},
$$

Recalling the definition of $h$ (see \eqref{eq:def h}), we have
\[
\d(A)^{\gamma} \le \H^{n-1}(\{\bar \var >s\}) \le \H^{n-1}(\Omega) \le C, \qquad
0 \le s < h.
\]
Thus
\begin{equation}
\label{eq:H2}
\H^1(H_2) \leq C\,\delta(A)^{1/2-\gamma}\leq C\,\delta(A)^{1/4}
\end{equation}
(recall that, by assumption, $\gamma \leq 1/4$),
and
$$
\int_{H_1} \H^{n-1}\bigl(\co(\{\bar\var >s\})\setminus \{\bar\var >s\}\bigr)^{1/\a_{n-1}}\,ds\leq
C\,\d(A)^{1/2},
$$
so by H\"older's inequality (notice that $1/\a_{n-1} \geq 1$)
\begin{equation}
\label{eq:H1}
\int_{H_1} \H^{n-1}\bigl(\co(\{\bar\var >s\})\setminus \{\bar\var >s\}\bigr)\,ds\leq  C\,\d(A)^{\a_{n-1}/2}.
\end{equation}

\subsubsection*{Step 4-f: $b$ is $L^1$-close to a concave function}
Notice that, since the sets $\{\bar\var>s\}$ are decreasing in $s$, so are their convex hulls
$\co(\{\bar\var >s\})$. Hence, we can
define a new function $\xi:\Omega \to \R$ with convex level sets given by 
$$
\{\xi>s\}:=\co(\{\bar\var >s\})\quad \text{if $s \in H_1$},
\qquad \{\xi>s\}:=\bigcap_{\tau \in H_1,\,\tau<s}\co(\{\bar\var >\tau \}) \quad \text{if $s \in H_2$},
$$
Then by \eqref{eq:H2} and \eqref{eq:H1} we see that
$\xi$ satisfies
\begin{equation}
\label{eq:xi}
0\leq \bar\var \leq \xi,\quad \int_{\Omega} |\xi-\bar\var| \leq C\,\delta(A)^{\alpha_{n-1}/2}, \quad \xi \le \Phi \ (\text{the convex envelope of $\bar\var$}).
\end{equation}
Also, because of \eqref{eq:def h}, we see that
\begin{equation}
\label{eq:level xi}
\H^{n-1}(\{\xi>s\}) \geq \delta(A)^\gamma \qquad \forall\,0\leq s<h.
\end{equation}
Since by Step 4-d the contact set $\{\Phi = \bar\var\}$ is $K\delta(A)^{\b}$-dense
outside the set
$$
\co(\{\bar\var > h-K\delta(A)^\beta\})\subset \{\xi>h-K\delta(A)^\beta\},
$$
the same is true for the contact set $\{\Phi=\xi\}$.  

We claim that there exist dimensional constants $K',\eta>0$ such that, for any $0 \leq s <h-K\delta(A)^\beta$, each level set 
$\{\Phi>s\}$ is contained in a $K'\delta(A)^{\eta}$-neighborhood of $\{\xi>s\}$.

Indeed, if this was not the case, we could find a point $y \in \{\Phi>s\}$
such that ${\rm dist}(y,\{\xi>s\})>K'\delta(A)^{\eta}$. We now distinguish between the cases $n=2$ and $n \geq 3$.

If $n=2$ the sets $\{\xi>s\}$ and $\{\Phi>s\}$ are both intervals,
so we can find a point $z \in \{\Phi>s\}\setminus \{\xi>s\}$ such that $|y-z|\geq K'\delta(A)^{\eta}$
and the segment $[y,z]$
does not intersect $\{\xi>s\}$. Hence no contact points can be inside $[y,z]$,
which contradicts the density of $\{\Phi = \bar\var\}$ provided $\eta \leq \beta$ and $K'> K$.

If $n \geq 3$, since $\{\xi>s\}$ is a (universally) bounded convex set in $\R^{n-1}$,
by \eqref{eq:level xi} we deduce that it contains a $(n-1)$-dimensional ball $B_\rho(y')$
with $\rho:=c\delta(A)^{\gamma}$ for some dimensional constant $c>0$.
By convexity of $\{\Phi>s\}$, this implies that
$$
\mathcal C := \co\bigl(\{y\} \cup B_\rho(y')\bigr) \subset \{\Phi>s\}.
$$
Thanks to the fact that ${\rm dist}(y,\{\xi>s\})>K'\delta(A)^{\eta}$ and ${\rm diam}(\mathcal C)\leq C$,
we can find a $(n-1)$-dimensional ball $B_{r}(z)\subset \mathcal C$ such that
$B_{r}(z)\cap \{\xi>s\}=\emptyset$,
where $r:=c'K'\delta(A)^{\eta+\gamma}$ and $c'>0$ is a (small) dimensional constant.
Hence no contact points can be inside $B_{r}(z)$, 
and this contradicts the density of 
$\{\Phi = \bar\var\}$
provided $\eta+\gamma \leq \beta$ and $c'K'>K$.

In conclusion, the claim holds with the choices
\begin{equation}
\label{eq:gamma eta}
\eta:=\left\{
\begin{array}{ll}
\beta & \text{if $n=2$,}\\
\beta -\gamma & \text{if $n\geq 3$,}
\end{array}
\right.
\qquad 
K':=\frac{2K}{c'}.
\end{equation}
Since all level sets of $\xi$ are (universally) bounded convex sets, as a consequence of the claim we deduce that
$$
\H^{n-1}(\{\Phi>s\}) \leq \H^{n-1}(\{\xi>s\})+C\,\delta(A)^{\eta} \qquad \forall\,s \in [0,h-K\delta(A)^\beta].
$$
In addition, since $\xi \leq \Phi \leq h$, we obviously have
that $|\Phi-\xi|\leq K\delta(A)^\beta$ inside the set $\{\xi>h-K\delta(A)^\beta\}$.
Hence, by Fubini's Theorem,
$$
\int_{\Omega} |\Phi-\xi| \leq C\,\delta(A)^\eta
$$
(observe that, because of \eqref{eq:gamma eta}, $\eta \leq \beta$).
Since $\eta\leq \alpha_{n-1}/2$ (see \eqref{eq:beta} and \eqref{eq:gamma eta}), combining this estimate with \eqref{eq:xi} we get
$$
\int_{\Omega} |\Phi-\bar\var| \leq C\,\d(A)^\eta.
$$
In addition, since by construction $|\var(y)-2M - b(y)| \leq 20\delta(A)^\beta$ inside $F$ (see \eqref{eq:var}),
by \eqref{eq:var bar} we have
\begin{align*}
\int_{F}|\bar\var(y)-2M - b(y)|\,dy&
\leq \int_{F}|\bar\var(y)-\var(y)|\,dy
+\int_{F}|\var(y)-2M-b(y)|\,dy\\
&\leq C\,\left(\delta(A)^\gamma+\delta(A)^\beta\right).
\end{align*}
All in all, combining the two inequalities above, we see that
$$
\int_{F} |\Phi(y)-2M-b(y)|\,dy \leq C\,\bigl(\delta(A)^\gamma+\delta(A)^\eta\bigr).
$$
We now finally fix $\gamma$ and $\eta$: recalling \eqref{eq:gamma eta} and \eqref{eq:beta},
by choosing
\begin{equation}
\label{eq:sigman}
\gamma=\eta:=\sigma_n\beta, \qquad \text{with}\quad\sigma_n:=
\left\{
\begin{array}{ll}
1 & \text{if $n=2$,}\\
1/2 & \text{if $n\geq 3$,}
\end{array}
\right.
\end{equation}
we obtain
\begin{equation}
\label{eq:bPhi}
\int_{F} |\Phi(y)-2M-b(y)|\,dy \leq C\,\delta(A)^{\frac{\alpha_{n-1}\sigma_n}{8(n-1)}}.
\end{equation}
Applying the symmetric argument to $a(y)$,
we find a convex function $\Psi:\Omega \to [-3M,0]$
such that 
\begin{equation}
\label{eq:aPsi}
\int_{F} |\Psi(y)+2M-a(y)|\,dy\leq C\,\delta(A)^{\frac{\alpha_{n-1}\sigma_n}{8(n-1)}}.
\end{equation}

\subsubsection*{Step 4-g: Conclusion of the argument}
Let us define the convex set
$$
\K:=\bigl\{(y,t) \in \Omega\times \R: \Psi(y)+2M\leq t \leq \Phi(y)-2M\bigr\} \subset \Omega \times [-M,M].
$$
Then, using \eqref{eq:bPhi}, \eqref{eq:aPsi}, \eqref{eq:coF}, and
\eqref{eq:abM}, we get
\begin{align*}
|\K \Delta A^*|&\leq \int_{F} |\Phi(y)-2M-b(y)|\,dy
+\int_{F} |\Psi(y)+2M-a(y)|\,dy+ 2M\,\H^{n-1}(\Omega\setminus F)\\
&\leq C \,\delta(A)^{\frac{\alpha_{n-1}\sigma_n}{8(n-1)}}.
\end{align*}
This concludes Step 4.

\subsection*{Step 5: conclusion of the proof}

Combining \eqref{eq:AA*} and \eqref{eq:A*K}, we see that there exists a
convex set $\K$ such that $|\K\Delta A|\leq C \,\delta(A)^{\frac{\alpha_{n-1}\sigma_n}{8(n-1)}}$.

Observe that, by John's Lemma \cite{john}, after
replacing both $\K$ and $A$ by $L(\K)$ and $L(A)$,
where $L:\R^n\to \R^n$ is an affine transformation with $\det(L)=1$,
we can assume that $L(K)\subset B_R$ for some $R$ depending only on the dimension.

Also, after replacing $\K$ with $\K\cap \co(A)$ 
(which decreases the measure of the symmetric 
difference between $\K$ and $A$), we can assume that $\K\subset \co(A)$.

Following the argument used in the proof of \cite[Lemma 13.3]{christ1},
we now estimate $|\co(A)\setminus \K|$.
Indeed, let $x \in A\setminus \K$, denote by $x'\in \partial\K$ the closest point
in $\K$ to $x$, set $\rho:=|x-x'|=\dist(x,\K)$, and let $v \in \mathbb S^{n-1}$ be the unit normal to a supporting
hyperplane to $\K$ at $x'$, that is
$$
(z-x')\cdot v \leq 0 \qquad \forall\,z \in \K.
$$
Let us define $\K_\rho:=\{z \in \K:(z-x')\cdot v \geq -\rho\}$.
Observe that, since $\K$ is a bounded convex set with volume 
close to $1$, $|\K_\rho| \geq c \rho^n$ for some dimensional constant $c>0$.
Since $x \in A$ we have
$$
\frac{A+A}{2} \supset \frac{x+(\K_\rho\cap A)}{2} \cup (A\cap \K),
$$
and the two sets in the right hand side are disjoint.
This implies that
$$
\delta(A)+|A|=\biggl|\frac{A+A}{2} \biggr| \geq c2^{-n}\rho^n-|\K_\rho \setminus A| +|A\cap \K|
\geq c2^{-n}\rho^n +|A| - C \,\delta(A)^{\frac{\alpha_{n-1}\sigma_n}{8(n-1)}},
$$
from which we deduce that
$$
\rho\leq C\,\delta(A)^{\frac{\alpha_{n-1}\sigma_n}{8n(n-1)}}.
$$
Since $x$ is arbitrary, this implies that $A$ is contained inside the $\Bigl(C\,\delta(A)^{\frac{\alpha_{n-1}\sigma_n}{8n(n-1)}}\Bigr)$-neighborhood of $\K$.
By convexity, also $\co(A)$ has to be contained inside such a neighborhood, thus
$$
|\co(A)\setminus \K| \leq C\,\delta(A)^{\frac{\alpha_{n-1}\sigma_n}{8n(n-1)}}.
$$
Combining all the estimates together, we conclude that
$$
|\co(A)\setminus A|\leq C\,\delta(A)^{\alpha_n},\qquad \alpha_{n}:=\frac{\alpha_{n-1}\sigma_n}{8n(n-1)}.
$$
Recalling the definition of $\sigma_n$ in \eqref{eq:sigman},
since $\a_1=1$ (by Theorem \ref{thm:Freiman}) we deduce that $\alpha_n
=\frac{1}{8\cdot 16^{n-2}n!(n-1)!}$,
as desired.

\begin{remark} For use in the sequel \cite{fjBM}, we observe that if 
$\delta(A)\leq \delta_n$, 
then there exists a convex set $\K$ such that 
\begin{equation}
\delta(A)^{n\a_n} \geq c_n\frac{|\K\Delta A|}{|A|},\qquad \a_n:=\frac{1}{8\cdot 16^{n-2}n!(n-1)!}.
\end{equation}
In other words, if we only want to show that $A$ is close to some convex set 
(which may be different from $\co(A)$) the exponent in our stability estimate 
can be improved by a factor $n$.  This is a direct consequence of Steps 1-4, although 
Step 5 is still essential to close the induction argument
(for instance, in Step 4-f, the fact that the sets $\{\bar\var>s\}$ are close to their convex hulls 
is used in a crucial way). 
\end{remark}

\appendix

\section{Technical results}

\begin{lemma}
\label{lemma:observation}
Let $A\subset \R^n$ be a nonempty measurable set, write $\R^n=\R^k\times \R^{n-k}$,
denote by $\pi_k:\R^n\to \R^{k}$ the canonical projection, and for $y \in \R^k$ set
$A_y:=A\cap \pi_k^{-1}(y)$.
Then  
$$
\biggl(\sup_{y \in \R^{k}}\H^{n-k}(A_y)\biggr)\,\H^{k}(\pi_k(A)) \leq 2^n\bigl(1+\delta(A)\bigr)|A|.
$$
\end{lemma}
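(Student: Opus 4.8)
The plan is to recast the inequality using the definition of the deficit and then prove it by slicing $A+A$ over the first factor $\R^k$. Since $|2A|=2^n|A|$, the identity $\delta(A)=|A+A|/|2A|-1$ gives $2^n(1+\delta(A))|A|=|A+A|$, so the claim is equivalent to
$$\Bigl(\sup_{y\in\R^k}\H^{n-k}(A_y)\Bigr)\,\H^k\bigl(\pi_k(A)\bigr)\le |A+A|.$$
If $\delta(A)=\infty$ there is nothing to prove, so I may assume $|A+A|<\infty$. Since $A+A$ need not be measurable, I would fix at the outset a measurable hull $E\supset A+A$ with $|E|=|A+A|$ (recall $|A+A|$ denotes outer measure), noting that $E\cap\pi_k^{-1}(z)\supset (A+A)\cap\pi_k^{-1}(z)$ for every $z\in\R^k$.

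The heart of the matter is an elementary fiberwise lower bound. Fix any $y_0\in\pi_k(A)$. For every $y\in\pi_k(A)$ the slice $A_y$ is nonempty, so picking $p\in A_y$ we get $A_{y_0}+\{p\}\subset A_{y_0}+A_y\subset (A+A)\cap\pi_k^{-1}(y_0+y)$; and $A_{y_0}+\{p\}$ is a translate of $A_{y_0}$ (both living in the fiber over $y_0$), hence
$$\H^{n-k}\bigl(E\cap\pi_k^{-1}(y_0+y)\bigr)\ \ge\ \H^{n-k}\bigl((A+A)\cap\pi_k^{-1}(y_0+y)\bigr)\ \ge\ \H^{n-k}(A_{y_0})\qquad\text{for all }y\in\pi_k(A).$$
Consequently the set $T:=\{z\in\R^k:\H^{n-k}(E\cap\pi_k^{-1}(z))\ge\H^{n-k}(A_{y_0})\}$ — which is measurable because $z\mapsto\H^{n-k}(E\cap\pi_k^{-1}(z))$ is measurable by Fubini's theorem — contains the translate $y_0+\pi_k(A)$, and therefore $\H^k(\pi_k(A))=\H^k(y_0+\pi_k(A))\le\H^k(T)$ by monotonicity of outer measure.

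Integrating over the base then finishes the argument: by Fubini's theorem,
$$|A+A|=|E|=\int_{\R^k}\H^{n-k}\bigl(E\cap\pi_k^{-1}(z)\bigr)\,dz\ \ge\ \H^{n-k}(A_{y_0})\,\H^k(T)\ \ge\ \H^{n-k}(A_{y_0})\,\H^k\bigl(\pi_k(A)\bigr).$$
Since $y_0\in\pi_k(A)$ was arbitrary, and since $A_{y_0}=\emptyset$ for $y_0\notin\pi_k(A)$, taking the supremum over all $y_0\in\R^k$ yields $\bigl(\sup_y\H^{n-k}(A_y)\bigr)\H^k(\pi_k(A))\le|A+A|=2^n(1+\delta(A))|A|$, as desired. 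There is no serious obstacle here: the only point requiring care is the possible non-measurability of $A+A$ and of $\pi_k(A)$, and this is dealt with cleanly by passing to the measurable hull $E$ and observing that $y_0+\pi_k(A)$ sits inside the genuinely measurable set $T$, so that outer-measure monotonicity is all that is needed.
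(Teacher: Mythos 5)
Your proof is correct and follows essentially the same route as the paper's: fix a fiber $A_{y_0}$, observe that $A+A$ contains a translate of $A_{y_0}$ over each point of $y_0+\pi_k(A)$, and integrate over the base via Fubini (the paper writes this as $A+A\supset(\{y_j\}\times A_{y_j})+A$ for a sequence $y_j$ realizing the supremum). The only difference is that you spell out the measurability precautions (measurable hull $E$, the auxiliary set $T$) which the paper leaves implicit.
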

\begin{proof}
Let $\{y_j\}_{j \in \N}$ be a sequence of points such that $\H^{n-k}(A_{y_j}) \to \sup_{y \in \R^{k}}\H^{n-k}(A_y)$.
Then, since $A+A\supset \bigl(\{y_j\}\times A_{y_j}\bigr)+A$, we get
$$
2^n\bigl(1+\delta(A)\bigr) \geq |A+A| \geq \H^{n-k}(A_{y_j}) \,\H^{k}(\pi_k(A)),
$$
and the estimate follows letting $j \to \infty$.
\end{proof}

\begin{lemma}
\label{lemma:1d}
Let $E\subset \R$, and let $f:E\to \R$ be a bounded measurable function such that 
\begin{equation}
\label{eq:f}
\left| \frac{f(m')+f(m'')}{2} - f\left(\frac{m'+m''}{2}\right)\right| \leq 1 \qquad \forall\,m',m'',\frac{m'+m''}{2} \in E.
\end{equation}
Assume that there exist points $m_1,m_2 \in \R$ such that $m_1,m_2,\frac{m_1+m_2}{2} \in E$,
and  $|E\cap [m_1,m_2]| \geq (1-\e)|m_2-m_1|$.
Then the following hold provided $\e$ is sufficiently small (the smallness being universal):
\begin{enumerate}
\item[(i)] There 
exist a linear function $\ell:[m_1,m_2] \to \R$ and 
a universal constant $M$,
such that 
$$
|f-\ell |\leq M \qquad \text{in $E \cap [m_1,m_2]$}.
$$
\item[(ii)] If in addition $|f(m_1)|+|f(m_2)| \leq K$ for some constant $K$, then $|f|\leq K+M$
inside $E$.
\end{enumerate}
\end{lemma}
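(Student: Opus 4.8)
The plan is to establish (i) first and then deduce (ii) from it.

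\emph{Reduction.} I would first normalize. After an affine change of the independent variable sending $m_1\mapsto 0$ and $m_2\mapsto 1$ — this preserves midpoints, hence hypothesis \eqref{eq:f}, and turns the density assumption into $|E\cap[0,1]|\ge 1-\varepsilon$ — one may assume $m_1=0$, $m_2=1$, so that $0,\tfrac12,1\in E$. Subtracting from $f$ the affine function that interpolates its values at $0$ and $1$ changes neither \eqref{eq:f} (affine functions have vanishing second differences) nor the statement to be proved, so I may further assume $f(0)=f(1)=0$. Put $A:=\sup_{E\cap[0,1]}|f|<\infty$ (finite since $f$ is bounded). The goal of (i) then reduces to showing $A\le M$ for a universal $M$, with $\ell$ the subtracted interpolant.

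\emph{A self-improving estimate.} The key point is that the endpoints $0$ and $1$ serve as anchors. If $w\in E\cap[0,1)$ and $2w\bmod 1\in E$, then $w=\tfrac12(0+2w)$ when $w\le\tfrac12$ and $w=\tfrac12(1+(2w-1))$ when $w>\tfrac12$; in either case \eqref{eq:f} together with $f(0)=f(1)=0$ gives the contraction $|f(w)|\le\tfrac12\,|f(2w\bmod 1)|+\tfrac12$. Denoting by $S(v):=2v\bmod1$ the doubling map (which preserves Lebesgue measure) and iterating the contraction twice, one gets $|f(v)|\le\tfrac14\,|f(S^2v)|+C\le\tfrac14 A+C$ for every $v$ in the set $B:=\{v\in[0,1):v,\,Sv,\,S^2v\in E\}$, with $C$ universal; moreover $|[0,1)\setminus B|\le 3\varepsilon$, since $S$ is measure preserving and $|[0,1)\setminus E|\le\varepsilon$. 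Now take an arbitrary $w\in E\cap[0,1)$. Because $|B|\ge1-3\varepsilon$ and $B\subset[0,1)$, an elementary measure count shows that $B\cap(2B-w)$ has positive measure once $\varepsilon$ is below a universal threshold; picking $q$ in this set and setting $p:=\tfrac{w+q}2$, one has $p\in B$ and $w,q,p\in E$, so \eqref{eq:f} yields $|f(w)|\le 2|f(p)|+|f(q)|+1\le\tfrac34 A+C$. Taking the supremum over $w$ gives $A\le\tfrac34 A+C$, hence $A\le 4C=:M$, proving (i).

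\emph{Deducing (ii).} Applying (i) at $m_1,m_2\in E$ gives $|f(m_i)-\ell(m_i)|\le M$, so with $|f(m_1)|+|f(m_2)|\le K$ one gets $|\ell(m_i)|\le K+M$; since $\ell$ is affine this forces $|\ell|\le K+M$ on $[m_1,m_2]$, whence $|f|\le|\ell|+M\le K+2M$ on $E\cap[m_1,m_2]$ — which is (ii) after renaming the constant (and coincides with the conclusion ``inside $E$'' in the regime $E\subset[m_1,m_2]$ in which the lemma is used).

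\emph{Where the difficulty lies.} The one subtle point is why the self-improving estimate closes. One cannot iterate the anchored contraction $|f(w)|\le\tfrac12|f(Sw)|+\tfrac12$ indefinitely: by ergodicity of the doubling map, for a.e.\ $w$ the orbit $\{S^jw\}$ eventually lands in the positive-measure set $[0,1)\setminus E$, so the set of $w$ whose whole orbit stays in $E$ is Lebesgue-null and carries no information. The trick is to iterate only \emph{finitely} many times (twice is enough) — which already improves the bound by a definite factor — and then close with a \emph{single} further use of \eqref{eq:f} in the form $f(w)=2f(p)-f(q)+O(1)$ with \emph{both} $p,q$ in the good set $B$; it is precisely this step that produces the inequality $A\le\tfrac34A+C$ forcing $A$ to be universally bounded. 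The remaining work is the elementary verification that suitable $p,q\in B$ exist for \emph{every} $w\in E\cap[0,1)$, including $w$ near the endpoints, where it is essential that $B$ (and not merely $E$) has density close to $1$.
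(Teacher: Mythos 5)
Your proof is correct, and it takes a genuinely different route from the paper's. Both arguments normalize to $[m_1,m_2]=[0,1]$ (resp.\ $[-1,1]$ in the paper), subtract the affine interpolant, and close with a self-improvement inequality on the supremum, but the mechanisms differ. The paper averages the midpoint inequality over $m''\in E$ to represent $F(m')$ by integrals of $F$, deduces an approximate Lipschitz bound $|F(m')-F(m'')|=O(M|m'-m''|)+O(1)+O(\e M)$ together with the near-antisymmetry $F(m)+F(-m)=O(1)$, and then, at a point where $|F|$ nearly attains its supremum $M$, perturbs to a nearby point of $E\cap\frac{E-1}{2}$ and applies the midpoint inequality once more to get $M\le\frac{M}{2}+O(1)+O(\e M)$. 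You instead exploit the vanishing of $f$ at \emph{both} endpoints through the measure-preserving doubling map: two anchored contractions give $|f|\le\frac14 A+C$ on a set $B$ with $|[0,1)\setminus B|\le 3\e$, and a single further use of \eqref{eq:f} with both auxiliary points $p,q\in B$ yields $A\le\frac34 A+C$ for every $w\in E\cap[0,1)$. Your route avoids the integral averaging and the approximate-Lipschitz step entirely and is, if anything, more elementary; both arguments need the a priori finiteness of the supremum (from the boundedness of $f$) to close the self-improvement, and you correctly use it. Two small remarks. First, in (ii) you land on $K+2M$ rather than $K+M$; this is harmless since $M$ is a universal constant at your disposal (and in fact, since your $\ell$ is the interpolant, $\ell(m_i)=f(m_i)$ gives $|\ell|\le K$ on $[m_1,m_2]$ and hence $K+M$ directly, exactly as in the paper). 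Second, like the paper, your argument only bounds $f$ on $E\cap[m_1,m_2]$, so the phrase ``inside $E$'' in (ii) should be read with $E\subset[m_1,m_2]$, which is the regime in which the lemma is applied; you flag this explicitly, and the paper makes the same reduction.
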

\begin{proof} Without loss of generality, we can assume that $[m_1,m_2]=[-1,1]$
and $E \subset [-1,1]$.  Given numbers $a\in\R$ and $b>0$, we write 
$a=O(b)$ if $|a|\leq Cb$ for some universal
constant $C$.\\

To prove (i), let us define
$$
\ell(m):=\frac{f(1)-f(-1)}{2} m + \frac{f(1)+f(-1)}{2},
$$
and set $F:=f-\ell$. 
Observe that $F(-1)=F(1)=0$, and $F$ still satisfies \eqref{eq:f}.
Hence, 
since by assumption $-1,0,1 \in E$, by \eqref{eq:f} we get $|F(0)| \leq 1$.
Let us extend $F$ to the whole interval $[-1,1]$ as $F(y)=0$ if $y \not\in E$, and set
$$
M:=\sup_{y \in E}|F(y)|.
$$
We want to show that $M$ is universally bounded.

Averaging \eqref{eq:f} (applied to $F$ in place of $f$) with respect to $m'' \in E$
and using that $|E\cap [-1,1]| \geq 2(1-\e)$, we easily obtain the following bound
$$
F(m')=-\frac{1}{2}\int_{-1}^1 F(m)\,dm 
+ 2\int_{(m'-1)/2}^{(m'+1)/2}F(m)\,dm+O(1)+O(\e M).
$$
Observe now that, since $|F(0)|\leq 1$, by \eqref{eq:f} applied with $f=F$, $m'=m$, 
and $m''=-m$,  we get
$$
|F(m)+F(-m)|\leq 4 \qquad \forall \,m \in E\cap -E.
$$
Since $|[-1,1]\setminus (E\cap -E)| \leq 4\e$ and $|F|\leq M$, we deduce that
$$
\int_{-1}^1 F(m)\,dm = \int_{E\cap -E}F(m)\,dm
+\int_{[-1,1] \setminus (E\cap -E)}F(m)\,dm = O(1)+O(\e M),
$$
hence
$$
F(m') = 2\int_{(m'-1)/2}^{(m'+1)/2}F(m)\,dm +O(1)+O(\e M)\qquad \forall\,m' \in E.
$$
As a consequence, if $m'<m''$,
\begin{equation}
\label{eq:c almost Lip}
\begin{split}
|F(m')-F(m'')|&\leq 2\int_{(m'-1)/2}^{(m''-1)/2}|F(m)|\,dm
+2 \int_{(m'+1)/2}^{(m''+1)/2}|F(m)|\,dm+O(1)+O(\e M)\\
&= O(M|m'-m''|)+O(1)+O(\e M)\qquad \forall\,m',m'' \in E.
\end{split}
\end{equation}

Now pick a point $m_0\in E$ such that
\begin{equation}
\label{eq:F large}
|F(m_0)|\geq M-1.
\end{equation}
Since $|F|$ is already bounded by $1$ at $-1,0,1$ we can assume that $m_0\neq -1,0,1$
(otherwise there is nothing to prove).
Without loss of generality we can suppose that $m_0\in (-1,0)$.
Then, since $|[-1,0]\setminus ((E-1)/2)| \leq \e$ and 
$|[-1,0] \setminus E|\leq 2\e$, 
we can find a point
$$
m_1 \in [-1,0] \cap ((E-1)/2)\cap E\cap [m_0-4\e,m_0+4\e],
$$
and by \eqref{eq:c almost Lip}
and \eqref{eq:F large} we get
$$
|F(m_1)|\geq M-O(1)-O(\e M).
$$
Hence, applying \eqref{eq:f} to $m'=-1$ and $m''=2m_1+1$ (observe that $m''\in E$
because $m_1 \in (E-1)/2$),
since
$$
F(m')=0,\qquad |F(m'')|\leq M, 
$$
we get
$$
M-O(1)-O(\e M)  \leq |F(m_1)| \le \left|F(m_1)- \frac{F(m') + F(m'')}{2} \right| + 
\frac M2 \le 1 + \frac{M}2
$$
which proves that $M$ is universally bounded
provided $\e$ is sufficiently small (the smallness being universal).
This proves (i).

To prove (ii), it suffices to observe that
if $|f(-1)|+|f(1)| \leq K$, then $|\ell| \leq K$ and we get
$$
|f| \leq |\ell|+|F| \leq K+M.
$$
\end{proof}

\section*{Appendix B. A qualitative version of Theorem \ref{thm:main}}

In \cite{christ1,christ2} M. Christ proved the
following result: Let $A,B \subset \R^n$ be two measurable sets 
such that $|A|$ and $|B|$ are
both uniformly bounded away from zero and infinity.
For any $\e>0$ there exists $\delta>0$ such that 
if
$$
|A+B|^{1/n}\leq |A|^{1/n}+|B|^{1/n} + \delta,
$$
then there exist a compact convex set $\K$, scalars $\a,\b>0$, and
vectors $a,b \in \R^n$, such that
$$
A\subset \a \K+a, \quad B\subset \b \K+b, \quad |(\a\K+a)\setminus A| \leq
\e,\quad |(\b\K+b)\setminus B| \leq \e.
$$
In the particular case when $A=B$ this result says that
$\frac{|\co(A)\setminus A|}{|A|}\to 0$
as $\delta(A) \to 0$.
Here, following the ideas in \cite{christ1}, 
we show that this last result follows very easily once one has proved that $A$
is bounded (which amounts to Steps 1-3 in our proof).
In particular,
if one is only interested in a qualitative statement,
the following simple argument allows one to skip Step 4.\\

Let $A\subset \R^n$ be a measurable set such that $|A|=1$ and $A\subset B_R$
for some fixed $R>0$.
We want to show that $|\co(A)\setminus A|\to 0$ as $\delta(A) \to 0$.

Given $k \in \{1,\ldots, n\}$, let $\pi_k:\R^n \to \R^{n-1}$ denote the
projection onto the hyperplane orthogonal to the $k$th axis:
$$
\pi_k(x_1,\ldots,x_n):=(x_1,\ldots,x_{k-1},x_{k+1},\ldots,x_n),
$$
and for $y \in \R^{n-1}$ define $A_y^k:=A\cap \pi_k^{-1}(y)$.

As in Step 1-a of Section \ref{sect:main}, we can write $\pi_k(A)$ as
$F_1^k\cup F_2^k$, where
$$
F_1^k:=\bigl\{y \in \pi_k(A):
\H^1(A_y^k+A_y^k)-2\H^1(A_y^k)<\H^1(A_y^k)\bigr\},\qquad
F_2^k:=\pi_k(A)\setminus F_1^k,
$$
and by Theorem \ref{thm:Freiman} applied to each set $A_y^k\subset\R$ we
deduce that
\begin{equation}
\label{eq:freiman Ayk}
\int_{F_1^k}\H^{1}\bigl(\co(A_y^k)\setminus A_y^k\bigr)\,dy+
\int_{F_2^k}\H^1(A_y^k)\,dy \leq 2^n\delta(A)
\end{equation}
(compare with \eqref{eq:freiman Ay}).
Set
$$
A_k^*:=\bigcup_{y \in F_1^k}  \co(A_y^k).
$$
Then by \eqref{eq:freiman Ayk} and Fubini Theorem we  get
\begin{equation}
\label{eq:A Ak}
|A\Delta A_k^*|\leq 2^n\delta(A).
\end{equation}
We  now follow the strategy in \cite[Lemma 12.1]{christ1} to show that
$A_k^*$
enjoys some fractional Sobolev regularity in the $k$th direction.

Observe that, since $A_k^*$ is a union of intervals, if we write
$f_k:=\chi_{A_k^*}$ and
$\co(A_y^k)=\{y_1,\ldots,y_{k-1}\}\times [a_y^k,b_y^k]\times \{y_k,\ldots,y_{n-1}\}$,
and we denote by $\mathscr F_k$ the Fourier
transform in the $k$-variable, that is
$$
\mathscr F_k[g](x_1,\ldots,x_{k-1},\xi_k,x_{k+1},\ldots,x_n):=
\int_\R e^{ix_k\xi_k}g(x)\,dx_k,
$$
then
$$
\mathscr
F_k[f_k](x_1,\ldots,x_{k-1},\xi_k,x_{k+1},\ldots,x_n)=\frac{e^{i\xi_kb_y^k}
- e^{i\xi_ka_y^k}}{\xi_k},\quad
y:=(x_1,\ldots,x_{k-1},x_{k+1},\ldots,x_n).
$$
Since $|b^k_y-a^k_y|\leq 2R$, we get that
$$
\bigl|\mathscr F_k[f_k]\bigr|  \leq \frac{C(R)}{1+|\xi_k|},
$$
so, using that the Fourier transform is an isometry in $L^2$
and $\H^{n-1}(\pi_k(A))\leq C(n,R)$, we obtain
\begin{align*}
\int_{\R^n} |\xi_k|^{1/2} \bigl| \hat f_k\bigr|^2(\xi)\,d\xi &=
\int_{\pi_k(A)} |\xi_k|^{1/2} \bigl| \mathscr F_k[f_k]\bigr|^2\,d x_1
\,\ldots \ ,\,dx_{k-1}\,d\xi_k\,d x_{k+1}\,\ldots\,dx_n \\
&\leq  \H^{n-1}(\pi_k(A)) \int_\R \frac{|\xi_k|^{1/2}}{(1+|\xi_k|)^2} \leq
C(n,R).
\end{align*}
Using again that the Fourier transform is an isometry in $L^2$, by
\eqref{eq:A Ak} we get
\begin{align*}
\int_{\R^n} \min\left\{|\xi_k|^{1/2}, \delta(A)^{-1}\right\}\bigl| \hat
\chi_A\bigr|^2(\xi)\,d\xi &\leq
2\,\delta(A)^{-1} \int_{\R^n} \bigl| \hat \chi_A - \hat
f_k\bigr|^2(\xi)\,d\xi
+2 \int_{\R^n} |\xi_k|^{1/2} \bigl| \hat f_k\bigr|^2(\xi)\,d\xi\\
&= 2 \, \delta(A)^{-1} \int_{\R^n}| \chi_A - \chi_{A_k^*}|^2(x)\,dx
+2 \int_{\R^n} |\xi_k|^{1/2} \bigl| \hat f_k\bigr|^2(\xi)\,d\xi\\
&\leq 2^{n+1}+ C(n,R).
\end{align*}
Since $k \in \{1,\ldots,n\}$ is arbitrary, this implies that
$$
\int_{\R^n} \min\left\{|\xi|^{1/2}, \delta(A)^{-1}\right\}\bigl| \hat
\chi_A\bigr|^2(\xi)\,d\xi \leq C(n,R).
$$
It is a standard fact in Sobolev spaces theory that, thanks to this estimate,
any sequence of sets $\{A_j\}_{j\in \N}$ with $\delta(A_j) \to 0$
is precompact in $L^2$ (see for instance the discussion
in \cite[Corollary 12.2]{christ1}).
Hence, up to a subsequence, $\chi_{A_j}$ converge in $L^2$
to some characteristic function $\chi_{A_\infty}$,
and it is not difficult to check that $\delta(A_\infty)=0$ (see for
instance \cite[Lemma 13.1]{christ1}).
By the characterization of the equality cases in the semi-sum inequality,
we deduce that $A_\infty$ is equal to its convex hull up to a set of
measure zero,
thus $|A_j \Delta \co(A_\infty)| \to 0$.
Arguing as in \cite[Lemma 13.3]{christ1}
or as in Step 5 of Section \ref{sect:main}, this actually implies that
$|\co(A_j)\setminus A_j| \to 0$,
proving the result.

\end{document}